\documentclass[12pt,a4paper]{article}
\usepackage{amsfonts}
\usepackage{amsmath,latexsym,amssymb,amsfonts,amsbsy, amsthm}
\usepackage[usenames]{color}
\usepackage{xspace,colortbl,cite}
\allowdisplaybreaks
 \linespread{1.2}

\newcommand{\beq}{\begin{equation}}
\newcommand{\eeq}{\end{equation}}
\newcommand{\ben}{\begin{eqnarray}}
\newcommand{\een}{\end{eqnarray}}
\newcommand{\beno}{\begin{eqnarray*}}
\newcommand{\eeno}{\end{eqnarray*}}
\newcommand{\dint }{\displaystyle\int }

\theoremstyle{plain}
\newtheorem{theorem}{Theorem}[section]

\newtheorem{proposition}[theorem]{Proposition}
\newtheorem{lemma}[theorem]{Lemma}
\newtheorem{remark}[theorem]{Remark}

\numberwithin{theorem}{section} \numberwithin{equation}{section}
\renewcommand{\theequation}{\thesection.\arabic{equation}}

\newcommand{\average}{{\mathchoice {\kern1ex\vcenter{\hrule height.4pt
width 6pt depth0pt} \kern-9.7pt} {\kern1ex\vcenter{\hrule height.4pt
width 4.3pt depth0pt} \kern-7pt} {} {} }}

\newtheorem{thm}{Theorem}[section]

\newtheorem{coro}[thm]{Corollary}

\def\R{\mathbb{R}}



%

%


\renewcommand{\phi}{\varphi}

\newcommand{\be}{\begin{equation}}
\newcommand{\ee}{\end{equation}}

\newcommand{\N}{\mathbb{N}}




\newcommand{\cH}{{\mathcal H}}

\newcommand{\dist}{{\rm dist}}
\newcommand{\supp}{{\rm supp}}

\newcommand{\weak}{\rightharpoonup}

\newcommand{\eps}{\varepsilon}

\renewcommand{\theequation}{\thesection.\arabic{equation}}



\renewcommand{\epsilon}{\varepsilon}




\begin{document}
\title{Symmetry breaking via Morse index for equations and systems of H{\'e}non-Schr\"odinger type}
\author{Zhenluo Lou$^a$, Tobias Weth$^b$\thanks{Corresponding author.},
Zhitao Zhang$^{a,}$\thanks{Supported by
National Natural Science Foundation of China 11325107, 11331010.}  \\
   {\small $^a$ Academy of Mathematics and Systems Science, the Chinese Academy of Sciences;}\\
   {\small Beijing 100190; and School of Mathematical Sciences,}\\
\small {University of Chinese Academy of Sciences, Beijing 100049, P.R. China}\\
      {\small $^b$ Institut f\"{u}r Mathematik, Goethe-Universit\"{a}t Frankfurt, Robert-Mayer-Str. 10,}\\
    {\small D-60629 Frankfurt a.M.,Gemany.}\\
     {\small E-mail: lzhenluoxin2008@163.com, weth@math.uni-fankfurt.de, zzt@math.ac.cn }}
\date{}
\maketitle
\begin{abstract}
We consider the Dirichlet problem for the Schr\"odinger-H\'{e}non system
$$
-\Delta u + \mu_1 u  = |x|^{\alpha}\partial_u F(u,v),\quad \qquad
 -\Delta v + \mu_2 v  = |x|^{\alpha}\partial_v F(u,v)
$$
in the unit ball $\Omega \subset \R^N, N\geq 2$, where $\alpha>-1$ is a parameter and $F: \R^2 \to \R$ is a $p$-homogeneous $C^2$-function for some $p>2$ with $F(u,v)>0$ for $(u,v) \not = (0,0)$. We show that, as $\alpha \to \infty$, the Morse index of nontrivial radial solutions of this problem  (positive or sign-changing) tends to infinity. This result is new even for the corresponding scalar H\'{e}non equation and extends a previous result by Moreira dos Santos and Pacella \cite{Moreira-dos-Santos-Pacella} for the case $N=2$. In particular, the result implies symmetry breaking for ground state solutions, but also for other solutions obtained by an $\alpha$-independent variational minimax principle.
\end{abstract}

\noindent {\sl Keywords:} Symmetry breaking; Morse index, Schr\"odinger-H\'{e}non system, ground state solution. \

\vskip 0.2cm

\noindent {\sl AMS Subject Classification (2010):} 35B06, 35J50,
35J57 \

\renewcommand{\theequation}{\thesection.\arabic{equation}}
\setcounter{equation}{0}
\section{Introduction}
We consider the Dirichlet problem for the generalized H\'{e}non equation
\begin{equation}\label{1.4}
\left \{
 \begin{aligned}
-\Delta u+ \mu u &=|x|^\alpha |u|^{p-2}u&&\qquad \text{in $\Omega$,}\\
u&=0&&\qquad \text{on $\partial \Omega$,}
\end{aligned}
\right.
\end{equation}
and the corresponding problem for a Schr\"{o}dinger-H\'{e}non system
\begin{equation}\label{eq:henon-1-zh}
\left \{
 \begin{aligned}
 -\Delta u + \mu_1 u  &= |x|^{\alpha}\partial_u F(u,v)&&\qquad \text{in $\Omega$,}\\
 -\Delta v + \mu_2 v  &= |x|^{\alpha}\partial_v F(u,v)&&\qquad \text{in $\Omega$,}\\
u&=v=0&&\qquad \text{on $\partial \Omega$,}
    \end{aligned}
 \right.
\end{equation}
Here $\Omega \subset \R^N,N\geq 2$ is the unit ball, $\mu,\mu_1,\mu_2 \ge 0$, $p>2$, $\alpha>-1$ and $F: \R^2 \to \R$ satisfies the following assumption:
\begin{itemize}
\item[(F)] $F$ is of class $C^2$ on $\R^2$, homogeneous of degree $p>2$ and satisfies $F(u,v) >0$ for $(u,v) \in \R^2 \setminus \{0\}$.
\end{itemize}

We note that (\ref{1.4}) is merely called H\'enon equation in the case where $\mu=0$, and it has been introduced by H\'enon in \cite{Henon} in the context of astrophysics.
One of the first mathematical papers on this equation is due to Weiming Ni \cite{N}, who observed
that the presence of the weighted term leads to new critical exponents for
the non-existence of classical positive solutions. After Ni's work,
(\ref{1.4}) has been studied extensively in recent years. In \cite{C-P,S-W,S-S-W} the
authors study the existence of the ground state solutions of
\eqref{1.4} and their asymptotic behavior both for $\alpha>0$ fixed, $p \to 2^*$ and
$2 < p< 2^*$, $\alpha \to \infty$. Here $2^*$ is the critical Sobolev exponent given by
$2^*= \frac{2N}{N-2}$ for $N \ge 3$ and $2^* = \infty$ for $N=1,2$.
We also note that partial symmetry and symmetry-breaking results for ground state solutions of
the H\'{e}non equation were obtained in \cite{S-W}, while partial symmetry results for sign changing solutions were studied in \cite{Bartsch-Weth-Willem,van-Schaftingen-Willem}.

In the special case where $F(u,v) = \frac{a_1u^4+a_2v^4}{4}+ b \frac{u^2 v^2}{2}$ with constants $a_1,a_2 >0$, $b\ge 0$,
System (\ref{eq:henon-1-zh}) is a weighted version of the nonlinear Schr\"odinger system
\begin{equation}\label{1.2}
\left\{\begin{aligned} &-\Delta u+\mu_1 u=a_1u^3+b uv^2 &~~
\hbox{in}~ &~~\Omega,\\ &-\Delta v+\mu_2 v=a_2 v^3+b
u^2v  &~~\hbox{in}~ &~~ \Omega.\\
\end{aligned}\right.
\end{equation}
This system arises both in the context of nonlinear optics and of Bose-Einstein condensation and has been receiving extensive attention in recent years, see \cite{A-C,B-L-W-Z,D-W-Z-1,D-W-Z-2,D-W-Z-3,D-W-W,S-W,TV} and the
references therein. The majority of papers is concerned with $\Omega= \R^N$, but also the case of bounded domains $\Omega \subset \R^N$ has been studied together with Dirichlet boundary conditions. We remark in particular that
Sato-Wang \cite{S-W813} studied the limit system with
$b\rightarrow+\infty$, and they obtained the existence of multiple
solutions of the limit system.

Note that, if (\ref{1.2}) is considered with Dirichlet boundary conditions, then every solution $(u,v)$ with $u>0$ and $v>0$ in $\Omega$ is radial by Troy's symmetry result in \cite{T} based on the moving plane method. The same radiality result applies to the more general system (\ref{eq:henon-1-zh}) in the case where $\alpha=0$ and when the system  is cooperative, i.e., $\partial_{uv}F(u,v)>0$ for $u,v>0$. On the other hand, the moving plane method breaks down in the case $\alpha>0$ and symmetry breaking of ground state solutions is expected.

The notion of ground state solutions is defined in the case where $2<p< 2^*$. In this case, both problems (\ref{1.4}) and (\ref{eq:henon-1-zh}) have a variational structure with respect to the Sobolev space $\cH:=H^1_0(\Omega)$, as solutions are critical points of the corresponding functionals
$$
I_h: \cH \to \R, \qquad I_h(u)= \frac{1}{2}\int_\Omega (|\nabla u|^2 + \mu u^2)\,dx - \frac{1}{p}\int_\Omega |x|^\alpha |u|^p\,dx
$$
and $I_{hs}: \cH\times \cH \to \R$ given by
$$
 I_{hs}(u,v)=\frac{1}{2}\dint_\Omega(|\nabla u|^2+\mu_1 u^2+|\nabla v|^2+\mu_2 v^2)dx-\frac{1}{2}\dint_\Omega |x|^\alpha F(u,v)dx,
$$
The corresponding {\em Nehari manifolds} are then given by
$$
\mathcal{N}_h:=\{u \in \cH \setminus \{0\} \::\: I_h'(u)u = 0\}
$$
and
$$
\mathcal{N}_{hs}:=\{(u,v) \in \cH \times \cH \setminus \{(0,0)\} \::\: I_{hs}'(u,v)(u,v) = 0\},
$$
and they form natural constraints in the sense that solutions of (\ref{1.4}) resp. (\ref{eq:henon-1-zh}) are automatically contained in
$\mathcal{N}_h$, $\mathcal{N}_{hs}$, respectively.

As remarked above, it is expected that, for $\alpha>0$ large, ground state solutions of (\ref{1.4}) resp. (\ref{eq:henon-1-zh}) are not radially symmetric. For the case of (\ref{1.4}) with $\mu=0$, this has already been proved in \cite{S-S-W}. There are basically two approaches to prove symmetry breaking, i.e., the non-radiality of ground state solutions of (\ref{1.4}) and (\ref{eq:henon-1-zh}) for $\alpha$ large. The first approach is based on direct energy comparison between radial and nonradial functions in the Nehari manifolds $\mathcal{N}_{h}$ and
$\mathcal{N}_{hs}$. The second approach is to use the Morse index, which is equal to one for every minimizer of $I_h$ on $\mathcal{N}_{h}$ and every minimizer of $I_{hs}$ on $\mathcal{N}_{hs}$.
This approach is in fact much more general since the Morse index of classical solutions of (\ref{1.4}) and (\ref{eq:henon-1-zh}) can be defined for arbitrary $p>2$. Moreover, Morse index estimates are available not only for ground state solutions  but also for critical points associated with more general minimax principles.

To define the Morse index, we note that, for a solution $u$ of (\ref{1.4}), the linearized operator at $u$ is given by
$$
L_{u}^\alpha \phi:= -\Delta \phi + \mu \phi - (p-1) |x|^\alpha |u|^{p-2},
$$
Here and in the following, when we refer to a solution of (\ref{1.4}) or of (\ref{eq:henon-1-zh}), we always mean a classical solution in $C^2(\overline \Omega)$. Then the operator $L_{u}^\alpha$ is self-adjoint in $L^2(\Omega)$ with domain
$H^2(\Omega) \cap H^1_0(\Omega)$ and form domain $H^1_0(\Omega)$, and the Morse index $\mu(u)$ of $u$ is defined as the number of negative
eigenvalues of $L^\alpha_u$ counted with multiplicity. Similarly, for a (non-singular) solution $(u,v)$ of (\ref{eq:henon-1-zh}), the Morse index $\mu(u,v)$ is defined as the number of negative eigenvalues of the linearized operator $L_{u,v}^\alpha$ given by
\begin{equation}
  \label{eq:linearized-operator}
L_{u,v}^\alpha  {\phi_1 \choose \phi_2 } := -\Delta  { \phi_1 \choose \phi_2 } +
{\mu_1 \phi_1 \choose \mu_2 \phi_2 } -  |x|^{\alpha} D^2 F(u,v) {\phi_1 \choose \phi_2 }
\end{equation}
with $D^2 F(u,v) =  {\partial_{uu}F(u,v)\;\;
\partial_{uv} F(u,v)   \choose \partial_{uv} F(u,v)\;\; \partial_{vv} F(u,v)}$. We note that $L_{u,v}^\alpha$ is self-adjoint in $L^2(\Omega,\R^2)$ with domain $H^2(\Omega,\R^2) \cap H^1_0(\Omega,\R^2)$ and form domain $H^1_0(\Omega,\R^2)$

The main result of the present paper is the following:

\begin{thm}\label{th1.4}
Let $p>2$.
\begin{enumerate}
\item[i)] We have $\mu_H(\alpha) \to \infty$ as $\alpha \to \infty$, where
$$
\mu_{h}(\alpha) := \inf \{\mu(u)\::\: \text{$u$ is a nontrivial radial solution of (\ref{1.4})}\}.
$$
\item[ii)] Suppose that (F) is satisfied, and let
$$
\mu_{hs}(\alpha) := \inf \{\mu(u,v)\::\: \text{$(u,v)$ is a nontrivial radial solution of (\ref{eq:henon-1-zh})}\}
$$
for $\alpha >0$. Then $\mu_{hs}(\alpha) \to \infty$ as $\alpha \to \infty$.
\end{enumerate}
\end{thm}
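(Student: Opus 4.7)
The plan is to produce, for each integer $K\in\N$, a $K$-dimensional subspace of $\cH$ (respectively $\cH\times\cH$ for the system) on which the quadratic form $\langle L_u^\alpha\phi,\phi\rangle$ associated with the linearized operator is negative definite, whenever $\alpha$ is sufficiently large and $u$ is an arbitrary nontrivial radial solution. This bounds the Morse index from below by $K$, hence yields $\mu_h(\alpha)\to\infty$ and $\mu_{hs}(\alpha)\to\infty$. The subspace will be spanned by test functions $\phi_i$ supported on small disjoint balls inside a thin shell near $|x|=1$, chosen so that the ``potential'' $V_\alpha(x) := (p-1)|x|^\alpha|u|^{p-2}$ (respectively its scalar analog for the system) dominates the principal Dirichlet eigenvalue of $-\Delta$ on each ball.

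The starting point is the Nehari identity $\int_\Omega(|\nabla u|^2+\mu u^2)\,dx = \int_\Omega|x|^\alpha|u|^p\,dx$, which together with Sobolev's embedding gives $\int_\Omega|x|^\alpha|u|^p \geq c>0$ uniformly in $\alpha$. Since $|x|^\alpha$ is at most $\alpha^{-C}$ outside the shell $\{1-C\log\alpha/\alpha < |x|<1\}$, this mass concentrates in a narrow boundary layer. Introducing the boundary-layer variable $s=\alpha(1-|x|)$ and rescaling $\tilde u_\alpha(s) := A_\alpha^{-1}u(1-s/\alpha)$ with $A_\alpha := \alpha^{2/(p-2)}$, the radial equation takes the form
\[
-\tilde u_{\alpha,ss} + \frac{N-1}{\alpha-s}\tilde u_{\alpha,s} + \frac{\mu}{\alpha^2}\tilde u_\alpha = \Bigl(1-\frac{s}{\alpha}\Bigr)^\alpha \tilde u_\alpha^{p-1}\qquad\text{on }(0,\alpha), \quad \tilde u_\alpha(0)=0,
\]
which formally converges to the boundary-layer problem $-\tilde u''=e^{-s}\tilde u^{p-1}$ on $(0,\infty)$ with $\tilde u(0)=0$. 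A blow-up and ODE compactness argument extracts from any sequence $u_{\alpha_n}$ a subsequential nontrivial limit profile $\tilde u_*$.

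Granted such a non-zero $\tilde u_*$, in the shell $S_\alpha:=\{1-s_1/\alpha<|x|<1-s_0/\alpha\}$ with $0<s_0<s_1$ fixed so that $\tilde u_*\neq 0$ on $(s_0,s_1)$, the potential $V_\alpha$ is of order $\alpha^2$, via the scaling identity $V_\alpha \approx (p-1)\alpha^2 e^{-s}\tilde u_*(s)^{p-2}$. The shell $S_\alpha$ has volume of order $1/\alpha$, into which one can pack $\sim \alpha^{N-1}$ disjoint Euclidean balls $B_i$ of radius $\rho/\alpha$ for $\rho$ chosen large. The first Dirichlet eigenvalue of $-\Delta$ on such a ball is at most $c_N\alpha^2/\rho^2$, so for $\rho$ large one has $V_\alpha > c_N\alpha^2/\rho^2 + \mu$ on $B_i$; taking $\phi_i$ to be the first Dirichlet eigenfunction on $B_i$ (extended by zero) then gives $\langle L_u^\alpha\phi_i,\phi_i\rangle<0$. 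The $\phi_i$ are linearly independent by disjointness of supports, yielding $\mu(u)\gtrsim\alpha^{N-1}\to\infty$ and proving part (i). Part (ii) is handled by the same scheme with vector test functions $\Phi_i := \phi_i\cdot(u,v)/\sqrt{u^2+v^2}$: Euler's identity for the $p$-homogeneous $F$ gives $(u,v)^T D^2F(u,v)(u,v) = p(p-1)F(u,v) > 0$ by (F), so the effective scalar potential $p(p-1)|x|^\alpha F(u,v)/(u^2+v^2)$ is again of order $\alpha^2$ on $S_\alpha$ by the same rescaling applied to $(u,v)$.

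The main obstacle is establishing the non-triviality of the rescaled limit $\tilde u_*$ for an \emph{arbitrary} nontrivial radial solution, since no variational characterization is available to guarantee quantitative lower bounds on the rescaled amplitude. One must combine the Pohozaev identity (which already implies $|u'(1)|^2$ grows with $\alpha$), the uniform Nehari lower bound, the sharp concentration of $|x|^\alpha$-mass in a boundary layer of width $O(\log\alpha/\alpha)$, and ODE-based $C^2$-regularity of radial solutions to force the correct boundary-layer scaling and rule out the degenerate case $\tilde u_*\equiv 0$. Additional care is required for sign-changing or multi-peak solutions, whose scaled profiles may differ substantially from the positive case but which must still feed into the same box construction.
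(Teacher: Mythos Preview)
Your proposal is a sketch with a self-identified gap, and that gap is genuine and central. The crux of your direct approach is to show that, \emph{uniformly over all nontrivial radial solutions}, the rescaled profile $\tilde u_\alpha$ has a nontrivial limit with the fixed normalization $A_\alpha=\alpha^{2/(p-2)}$. But nothing in the hypotheses singles out this scale: excited radial solutions can have $L^\infty$ norms far above $\alpha^{2/(p-2)}$, so $\tilde u_\alpha$ need not be locally bounded and no subsequential limit is available; conversely, you must also rule out $\tilde u_\alpha\to 0$. The Nehari--Sobolev lower bound you invoke only gives $\int_\Omega|x|^\alpha|u|^p\ge c$, which via Pohozaev yields merely $|u'(1)|^2\gtrsim\alpha$, not the scale $\alpha^{2p/(p-2)}$ you need; and it requires $p<2^*$, whereas the theorem is stated for all $p>2$. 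There is also a tension in the box-packing step: balls of radius $\rho/\alpha$ fit in the shell only if $2\rho\le s_1-s_0$, while beating the Dirichlet eigenvalue requires $\rho^2>c_N/\bigl((p-1)\min_{[s_0,s_1]}e^{-s}|\tilde u_*|^{p-2}\bigr)$; whether both constraints can be met depends on quantitative information about $\tilde u_*$ that you do not have.

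The paper bypasses these difficulties by arguing by contradiction. Assuming the Morse indices stay bounded by $m$, a pigeonhole over spherical harmonics forces a \emph{stability} condition (nonnegativity of the radial quadratic form $Q_k$) for some fixed angular degree $\ell\le m$. This stability is then the missing input that controls the amplitude: via a doubling lemma and a Liouville theorem for stable solutions of the limit ODE system, one obtains local $L^\infty$ bounds on the transformed solutions $(u_k,v_k)$ and then passes to a limit profile on $[0,\infty)$ which is itself stable and hence trivial by the same Liouville theorem, contradicting a uniform lower bound on $|u_k'(0)|^2+|v_k'(0)|^2$ obtained from a Pohozaev-type identity (valid for every $p>2$). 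In short, the paper uses the bounded-Morse-index hypothesis to manufacture exactly the amplitude control your direct approach lacks; without that hypothesis (or a substitute such as a variational characterization), the compactness step in your plan cannot be carried out uniformly over all radial solutions.
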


We remark that assertion (i) is in fact a consequence of assertion (ii). Indeed, if $p>2$ and $u$ is a solution of (\ref{1.4}), then $(u,0)$ is a solution of (\ref{eq:henon-1-zh}) with $\mu_1= \mu$, $\mu_2=0$ and the nonlinearity $F(u,v)= \frac{|u|^{p}+|v|^p}{p}$ which satisfies assumption $(F)$. Moreover, if $u$ has Morse index $\mu(u)= k$, then $(u,0)$ has  Morse index $\mu(u,0)=k$, since the linearized operator $L_{u,0}^\alpha$ coincides with ${L_u^\alpha \choose -\Delta }$ on $H^2(\Omega,\R^2) \cap H^1_0(\Omega,\R^2)$ and the second component is a positive semidefinite operator.\\

Theorem~\ref{th1.4} is new already in the special case of the H\'{e}non equation
(\ref{1.4}) with $\mu=0$. In this case, it extends and complements recent interesting results of Moreira dos Santos and Pacella in  \cite{Moreira-dos-Santos-Pacella}, who obtained explicit lower bounds on the Morse index in the planar case $N=2$. More precisely, they consider the equation $-\Delta u = |x|^\alpha f(u)$ in a ball or an annulus in $\R^2$ together with Dirichlet boundary conditions, and they prove that, for
{\em even $\alpha>0$}, radial sign changing solutions have a Morse index greater than or equal to $\alpha+3$, see \cite[Theorem 1.4]{Moreira-dos-Santos-Pacella}. Moreover, if $f$ is superlinear, the lower bound improves to $\alpha + n(u) + 3$, where $n(u)$ denotes the number of nodal domains of $u$. These results are obtained by means of special transformations and a study of the corresponding
non-weighted reduced problem. For this approach, the assumptions $\alpha$ even, $N=2$ are key requirements, and it also does not seem to extend to systems of type (\ref{eq:henon-1-zh}).

Theorem~\ref{th1.4} immediately implies the following result on ground state solutions, as the Morse index equals one for all ground state solutions of (\ref{1.4}) and (\ref{eq:henon-1-zh}).

\begin{coro}\label{thm 3.1}
Let $2<p<2^*$. Then there exists a number $\bar{\alpha}>0$ such that for $\alpha \ge \bar{\alpha}$
\begin{enumerate}
\item[i)] every
ground state solution of (\ref{1.4}) is not radially symmetric;
\item[ii)] every ground state solution of (\ref{eq:henon-1-zh}) is not radially symmetric.
\end{enumerate}
\end{coro}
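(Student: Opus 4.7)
The plan is to reduce Corollary~\ref{thm 3.1} to Theorem~\ref{th1.4} by showing that every ground state solution of (\ref{1.4}) or (\ref{eq:henon-1-zh}) has Morse index equal to one. Once this is in hand, Theorem~\ref{th1.4} furnishes $\bar\alpha>0$ with $\mu_h(\alpha)\ge 2$ and $\mu_{hs}(\alpha)\ge 2$ for every $\alpha\ge\bar\alpha$, which rules out radial ground states in that range. As observed right after Theorem~\ref{th1.4}, assertion (i) is a special case of (ii) via the reduction $(u,v)=(u,0)$ with $F(u,v)=(|u|^p+|v|^p)/p$, so it is enough to treat the system.

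I would first analyze the fibering $g(t):=I_{hs}(tu,tv)$ for a ground state $(u,v)\in\mathcal{N}_{hs}$. Because $F$ is $p$-homogeneous with $p>2$, $g$ has the form $a t^2/2 - b t^p/p$ with
\begin{equation*}
a=\int_\Omega(|\nabla u|^2+\mu_1 u^2+|\nabla v|^2+\mu_2 v^2)\,dx, \qquad b=\int_\Omega|x|^\alpha F(u,v)\,dx,
\end{equation*}
both strictly positive, and the Nehari relation $g'(1)=0$ forces $a=b$; hence $g''(1)=(2-p)b<0$. In particular $I_{hs}''(u,v)[(u,v),(u,v)]=g''(1)<0$, so $(u,v)$ itself is a negative direction for the Hessian and the Morse index is at least one. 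The same non-degeneracy $g''(1)\neq 0$ also guarantees, via the standard Lagrange multiplier computation on $\mathcal{N}_{hs}$, that the constrained critical point $(u,v)$ satisfies $I_{hs}'(u,v)=0$ and is therefore a classical solution of (\ref{eq:henon-1-zh}).

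For the matching upper bound, set $J(u,v):=I_{hs}'(u,v)(u,v)$ and let $T:=\ker J'(u,v)$ be the tangent space to $\mathcal{N}_{hs}$ at $(u,v)$. Differentiating $h(t):=I_{hs}'((u,v)+t\psi)\bigl((u,v)+t\psi\bigr)$ at $t=0$ and using $I_{hs}'(u,v)=0$ gives
\begin{equation*}
J'(u,v)\psi \;=\; I_{hs}''(u,v)[(u,v),\psi] \qquad \text{for all } \psi\in\cH\times\cH,
\end{equation*}
so the $I_{hs}''(u,v)$-cross term between $(u,v)$ and any $w\in T$ vanishes. The decomposition $\cH\times\cH=\R(u,v)\oplus T$ is thus $I_{hs}''(u,v)$-orthogonal; the form is negative definite on $\R(u,v)$ by the previous step and nonnegative on $T$ because $(u,v)$ minimizes $I_{hs}$ on $\mathcal{N}_{hs}$. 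Sylvester's law of inertia then forces the Morse index to equal one.

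The only delicate point in this plan is the orthogonality identity $J'(u,v)=I_{hs}''(u,v)[(u,v),\cdot]$: it essentially comes for free once one invokes the Euler-Lagrange equation $I_{hs}'(u,v)=0$ together with the $p$-homogeneity of $F$, but it is the identity that makes the Hessian block-decompose and thereby pins the Morse index down to exactly one rather than merely bounding it from below. With this in hand, Corollary~\ref{thm 3.1} follows immediately from Theorem~\ref{th1.4} by choosing $\bar\alpha$ so large that $\mu_h(\alpha), \mu_{hs}(\alpha)\ge 2$ for $\alpha\ge\bar\alpha$.
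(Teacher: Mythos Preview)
Your proposal is correct and follows exactly the route the paper indicates: the paper simply states (without proof) that ``the Morse index equals one for all ground state solutions of (\ref{1.4}) and (\ref{eq:henon-1-zh})'' and then invokes Theorem~\ref{th1.4}, whereas you supply the standard Nehari-manifold argument (fibering map for the lower bound, tangent-space decomposition for the upper bound) that justifies this claim. One minor remark: your sentence ``it is enough to treat the system'' is slightly imprecise---a ground state $u$ of (\ref{1.4}) need not give a ground state $(u,0)$ of the system---but this is harmless, since your Morse-index-one argument applies verbatim to the scalar functional $I_h$, and then Theorem~\ref{th1.4}(i) finishes part (i) directly.
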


As noted already, Corollary~\ref{thm 3.1}i) is due to \cite{S-S-W} in the case $\mu=0$. Moreover, it has been proved by Smets and Willem in \cite{S-W} that ground state solutions of (\ref{1.4}) are foliated Schwarz symmetric for every
$\alpha \ge 0$. We recall that a function $u$ on $\Omega$ is called foliated Schwarz symmetric with respect to some unit vector $e \in \R^N$ is $u$ is axially symmetric with respect to the axis $\R e$ and nonincreasing in the angle $\theta = \arccos x \cdot e$. In the case where $p \ge 3$, the same symmetry is shared more generally by every solution $u$ of (\ref{1.4}) with Morse index $\mu(u) \le N$, the space dimension, see \cite{Pacella-Weth}.

In the case of the system~(\ref{eq:henon-1-zh}), we need to assume cooperativity again to recover foliated Schwarz symmetry. More precisely, if $p \ge 3$ and $\partial_{uv}F(u,v)>0$ for $u,v>0$, then every solution $(u,v)$ with Morse index $\mu(u,v) \le N$  is foliated Schwarz symmetric, i.e., both components $u$ and $v$ are foliated Schwarz symmetric with respect to the {\em same} unit vector $e$, see \cite[Theorem 1.4]{D-P}. Such a property is not expected in the non-cooperative case. For a study of symmetry properties in this case, we refer the reader to the recent papers \cite{T-W, S-T}.

We also mention related work on symmetry of solutions to the related second order Hamiltonian PDE system
 \begin{equation}\label{1.3}
 \left\{\begin{aligned}
 &-\Delta u=|x|^\beta |v|^{q-1}v&~ \hbox{in} ~&~ \Omega,\\
&-\Delta v=|x|^\alpha |u|^{p-1}u&~\hbox{in}~ &~ \Omega,\\
&u=v=0 &~~\hbox{on} ~&~\partial\Omega 
\end{aligned}\right.
\end{equation}
in the unit ball $\Omega \subset \mathbb{R}^N$, where $~\alpha, \beta> 0,~p,q>0$ and $\frac{1}{p+1}+\frac{1}{q+1}>\frac{N-2}{N}$.
In \cite{C-R}, Calanchi and Ruf have introduced the notion of ground state solutions, and they show symmetry breaking of these solutions for large values of $\alpha$ or $\beta$. 
Moreover, in \cite{B-S-R-1,B-S-R-2},  Bonheure, dos Santos and Ramos proved that ground
state solutions always exhibit foliated
Schwarz symmetry, and they present precise conditions on the parameters under which
the ground state solutions are not radially symmetric. Since there is no straightforward notion of Morse index of solutions of (\ref{1.3}), Theorem~\ref{th1.4} does not seem to have an analogue for (\ref{1.3}). Instead, the available results on symmetry breaking of ground state solutions of (\ref{1.3}) rely on a direct energy comparison involving radial and nonradial test functions.

We close the introduction with a brief outline of the strategy of the proof of Theorem~\ref{th1.4} and the structure of this paper. The main argument is given in Section~\ref{sec:symm-break-henon}. Here we argue by contradiction, assuming that there exists a sequence of numbers $\alpha_k>0$ with $\alpha_k \to \infty$ for $k \to \infty$ and, for every $k$, a nontrivial radial solution $(\tilde u_k,\tilde v_k)$ of (\ref{eq:henon-1-zh}) with $\alpha = \alpha_k$ and such that the Morse index of $(\tilde u_k,\tilde v_k)$ remains finite as $k \to \infty$. A spectral analysis using spherical harmonics then implies that an associated weighted radial eigenvalue problems only admits nonnegative eigenvalues. Inspired by Byeon and Wang \cite{B-W}, we then use a change of variable $r=|x|= e^{-\beta_k t}$ with $\beta_k = \frac{N}{N+\alpha_k}$ to transform both the $k$-dependent nonlinear system and the associated eigenvalue problem to the half line. Moreover, using the stability information derived in the previous step, we deduce local a priori bounds on the transformed sequence of solutions via a contradiction argument based on a blow up analysis. The a priori bounds then allow to pass to the limit along a subsequence and to deduce the existence of a stable solution of an associated limit problem either on $\R$ or on the half line. In Section~\ref{sec:preliminaries}, we derive a corresponding Liouville theorem which excludes the existence of stable solutions of these limit problems, and this yields a contradiction. Some technical parts of the argument, in particular regarding a variant of the very useful doubling lemma of Polacik, Quittner and Souplet \cite{PQS}, are postponed to the appendix of the paper.

\section{Preliminary results}
\label{sec:preliminaries}
In the present section, we collect some preliminary results which will be used in the proof of Theorem~\ref{th1.4}. Throughout this section, we assume that the function $F: \R^2 \to \R$ satisfies assumption (F) from the introduction with some $p>2$. We start by noting some immediate consequences of (F). First, it follows that
\begin{equation}
  \label{eq:consequence-F-0}
F(u,v) \ge c_F (|u|^p + |v|^p) \qquad \text{for $u,v \in \R$}
\end{equation}
with $c_F:= \min \{F(u,v)\::\: u,v \in \R,\:|u|^p + |v|^p=1\}>0.$ Moreover, by differentiating the function $t \mapsto F(tu,tv)$ at $t=1$, we see that
\begin{equation}
\label{F-consequence-1}
p F(u,v) = \partial_u F(u,v)u  + \partial_v F(u,v) v.
\end{equation}
Next, it is easy to see that the partial derivatives $\partial_u F(u,v)$, $\partial_v F(u,v)$ are \mbox{$p-1$-homogeneous.} Consequently, by differentiating the function
$$
t \mapsto \partial_u F(tu,tv)u  + \partial_v F(tu,tv) v
$$
at $t=1$, we see that
\begin{equation}
\label{F-consequence-2}
\Bigl \langle D^2 F(u,v) {u \choose v},{u \choose v} \Bigr \rangle = (p-1) \bigl(\partial_u F(u,v)u  + \partial_v F(u,v) v\bigr)
\end{equation}
where
$$
D^2 F(u,v) = \left (
  \begin{array}{cc}
\partial_{uu} F(u,v) &  \partial_{uv}F(u,v)\\
\partial_{uv}F(u,v) &  \partial_{vv}F(u,v)
  \end{array}
\right)
$$
Here and in the following, we let $\langle \cdot, \cdot \rangle$ denote the inner product in $\R^2$. Combining these assumptions, we see in particular that
\begin{align}
\label{F-consequence-3}
\Bigl \langle D^2 F(u,v) {u \choose v},{u \choose v} \Bigr \rangle &- \partial_u F(u,v)u  - \partial_v F(u,v) v\\
 &\ge p(p-2) c_F (|u|^p+|v|^p)\qquad \text{for $u,v \in \R$.}\nonumber
\end{align}

In Section~\ref{sec:symm-break-henon}, we will study radial solutions of (\ref{eq:henon-1-zh}) after a transformation. This approach will lead us to consider ODE systems both on $\R$ and on the half line $[0,\infty)$. In the remainder of this section, we will be concerned with observations related to functions on $\R$ and on $[0,\infty)$. We start with an elementary estimate for $C^1$-functions on the half line.

\begin{lemma}
\label{approx-lemma}
Let $u \in C^1([0,\infty))$ be a function with
\begin{equation}
\label{assumption-approx-lemma}
u(0)=0\qquad \text{and}\qquad   \int_0^\infty [u']^2\,dt < \infty.
\end{equation}
Then we have
\begin{equation}
  \label{eq:approx-lemma-limit-0}
u^2(t) \le t \int_0^\infty [u']^2\,d\tau \qquad \text{for all $\:t \ge 0$,}
\end{equation}
and there exists a sequence
$(\psi_n)_n$ in $C^\infty_c(0,\infty)$ with
\begin{equation}
  \label{eq:approx-lemma-limit}
\lim_{n \to \infty} \int_0^\infty (u-\psi_n)'^2\,dt = 0 \qquad \text{and}\qquad e^{-\delta t}(u-\psi_n)(t) \to 0
\end{equation}
uniformly in $t \in [0,\infty)$ as $n \to \infty$ for any $\delta>0$.
\end{lemma}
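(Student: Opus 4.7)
The pointwise bound \eqref{eq:approx-lemma-limit-0} is immediate from the fundamental theorem of calculus: since $u(0)=0$, one has $u(t) = \int_0^t u'(\tau)\,d\tau$, and the Cauchy--Schwarz inequality yields $u^2(t) \le t \int_0^t [u'(\tau)]^2\,d\tau \le t \int_0^\infty [u']^2\,d\tau$. The refined version $u^2(t) \le t \int_0^t [u']^2\,d\tau$ will also be used below.

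For the approximation claim, the plan is to truncate $u$ both near $0$ and near $\infty$ to obtain a compactly supported $v_n \in C^1_c(0,\infty)$, and then mollify. Fix smooth cutoffs $\chi,\eta : \R \to [0,1]$ with $\chi \equiv 0$ on $(-\infty,1]$, $\chi \equiv 1$ on $[2,\infty)$, $\eta\equiv 1$ on $(-\infty,0]$, and $\eta \equiv 0$ on $[1,\infty)$, and set
\[
\chi_n(t) := \chi(nt), \qquad \eta_n(t) := \eta\Bigl(\frac{t-n}{n^2-n}\Bigr), \qquad v_n := \chi_n \eta_n u.
\]
Then $v_n$ is supported in $[1/n,n^2]$, and $\psi_n := v_n * \rho_{\varepsilon_n}$ with a standard mollifier of sufficiently small radius $\varepsilon_n$ produces an element of $C_c^\infty(0,\infty)$. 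The main obstacle is to show $\int_0^\infty [(u-v_n)']^2\,dt \to 0$. Expanding $(u-v_n)' = -(\chi_n\eta_n)' u + (1-\chi_n\eta_n)u'$ and using $(a+b)^2 \le 2a^2+2b^2$, this reduces to three terms. The contribution from $(1-\chi_n\eta_n)^2 [u']^2$ vanishes by dominated convergence. For $(\chi_n')^2 u^2$, supported in $[1/n,2/n]$ with $|\chi_n'| \le Cn$, the refined bound yields $\int (\chi_n')^2 u^2 \,dt \le C\int_0^{2/n}[u']^2 \to 0$. The delicate term is $(\eta_n')^2 u^2$: using $u(t)^2 \le 2(t-n)\int_n^\infty [u']^2 + 2u(n)^2$ for $t \ge n$ together with $u(n)^2 \le n\int_0^\infty [u']^2$ from \eqref{eq:approx-lemma-limit-0}, a direct computation gives
\[
\int_0^\infty (\eta_n')^2 u^2 \, dt \le C\int_n^\infty [u']^2\,dt + \frac{Cn}{n^2-n}\int_0^\infty [u']^2\,dt \longrightarrow 0.
\]
The expanding transition region $[n,n^2]$ is essential here; a linear cutoff of width $O(n)$ would leave this integral merely bounded and not small.

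For the uniform decay of $e^{-\delta t}(u-\psi_n)(t)$: on $[0,2/n]$ the refined estimate gives $|u(t)| \le \sqrt{t\int_0^\infty[u']^2}$, which tends to $0$ uniformly as $n \to \infty$; on $[2/n,n]$ one has $u-v_n=0$; and on $[n,\infty)$ the bound $e^{-\delta t}|u(t)| \le e^{-\delta t}\sqrt{t\int_0^\infty[u']^2}$ is eventually decreasing in $t$, with its maximum $e^{-\delta n}\sqrt{n\int_0^\infty[u']^2}$ tending to $0$. The final mollification step contributes a negligible correction, both in the $L^2$-norm of the derivative and in the exponentially weighted sup norm, once $\varepsilon_n$ shrinks fast enough, by standard properties of convolution against a $C^1$-function of compact support.
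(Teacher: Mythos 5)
Your proof is correct, but it takes a genuinely different route from the paper's. The paper uses a \emph{single} logarithmic-scale cutoff $\phi_n(t)=\phi(\ln t/n)$, which truncates simultaneously near $0$ (on $[e^{-2n},e^{-n}]$) and near $\infty$ (on $[e^{n},e^{2n}]$); the whole error term $\int_0^\infty[\phi_n']^2u^2\,dt$ is then killed in one stroke by the crude bound $u^2(t)\le C_u t$ together with the substitution $s=\ln t$, which turns $\int_0^\infty\frac{1}{t}[\phi']^2(\tfrac{\ln t}{n})\,dt$ into $\int_\R[\phi']^2(\tfrac{s}{n})\,ds=O(n)$ and yields an overall $O(1/n)$ decay. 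You instead use two separate algebraic cutoffs, and near infinity the crude bound $u^2(t)\le C_u t$ is no longer sufficient on its own (as you correctly observe, it only gives boundedness, not smallness, of $\int(\eta_n')^2u^2$ even over the expanding window $[n,n^2]$); you compensate with the refined splitting $u(t)^2\le 2(t-n)\int_n^\infty[u']^2+2u(n)^2$, which exploits the smallness of the tail $\int_n^\infty[u']^2$, and your arithmetic for both resulting terms checks out. So the paper's approach buys a shorter computation via the logarithmic change of variables, while yours is more elementary and makes the mechanism (tail smallness of the Dirichlet integral plus a slowly varying outer cutoff) explicit. One further point in your favor: the lemma asserts $\psi_n\in C^\infty_c(0,\infty)$, whereas the paper's construction $\psi_n=\phi_n u$ only lands in $C^1_c(0,\infty)$ (which is all that is actually used later); your final mollification step, with $\varepsilon_n$ small enough that the support stays in $(0,\infty)$ and both the $L^2$-norm of the derivative error and the sup-norm error are $o(1)$, genuinely delivers the smoothness claimed in the statement.
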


\begin{proof}
By (\ref{assumption-approx-lemma}) and H\"older's inequality we have
$$
|u(t)| = \Bigl| \int_0^t u'(\tau)\,d\tau\Bigr| \le  \sqrt{C_u t} \quad \text{for $t \ge 0$}\qquad \text{with}\qquad C_u:=\int_0^\infty [u']^2\,d\tau,
$$
as claimed in (\ref{eq:approx-lemma-limit-0}). Let $\phi \in C_c^{\infty}(\R)$ with $0 \le \phi \le 1$ and $\phi \equiv 1$ on $[-1,1]$, $\phi \equiv 0$ on $\R \setminus [-2,2]$. Moreover, let
$$
\phi_n \in C_c^\infty(0,\infty),\qquad \phi_n(t)= \phi(\frac{\ln t}{n})
$$
We then note that $0 \le \phi_n \le 1$ on $(0,\infty)$ and
$$
\phi_n(t) \to 1 \qquad \text{as $n \to \infty\quad $ for $t \in (0,\infty)$,}
$$
where the convergence is uniform in compact subsets of $(0,\infty)$. Let $\psi_n := \phi_n u \in C^1_c(0,\infty)$ for $n \in \N$. Then we have
$$
e^{-\delta t}|u(t)-\psi_n(t)|= e^{-\delta t}|(1-\phi_n(t))u(t)|\le \sqrt{C_u t}e^{-\delta t}|1-\phi_n| \qquad \text{for $n \in \N$, $t\ge 0$,}
$$
where the RHS tends to zero as $n \to \infty$ uniformly in $t \ge 0$. It thus remains to show the first limit in (\ref{eq:approx-lemma-limit}). For this we note that
\begin{align*}
\int_0^\infty [(u-\psi_n)']^2\,dt &= \int_0^\infty [(1-\phi_n)u' - \phi_n' u]^2\,dt \\
&\le 2 \int_0^\infty \Bigl(
(1-\phi_n)^2[u']^2 + [\phi_n']^2 u ^2\Bigr)\,dt,
\end{align*}
where, by Lebesgue's theorem,
$$
\int_0^\infty (1-\phi_n)^2[u']^2 \to 0 \qquad \text{as $n \to \infty$,}
$$
and
\begin{align*}
&\int_0^\infty [\phi_n']^2 u^2\,dt = \frac{1}{n^2} \int_0^\infty  \frac{1}{t^2}[\phi']^2(\frac{\ln t}{n})u^2(t)\,dt \le \frac{4 C_u}{n^2} \int_0^\infty \frac{1}{t}[\phi']^2(\frac{\ln t}{n})\,dt \\
&=\frac{4 C_u}{n^2} \int_\R  [\phi']^2(\frac{s}{n}) \,ds =\frac{4 C_u}{n^2} \int_{-2n}^{2n}  [\phi']^2(\frac{s}{n}) \,ds
\le \frac{16 C_u \|\phi'\|_\infty^2}{n} \to 0 \quad \text{as $n \to \infty$.}
\end{align*}
The proof is thus finished.
\end{proof}

Next, we state a Liouville Theorem for bounded solutions $(u,v)$ of the ODE system
\begin{equation}
  \label{eq:limit-system-rescaling-without-bc}
 \left \{
 \begin{aligned}
-u''   &= \partial_u F(u,v)&&\qquad \text{in $I$,}\\
-v''   &= \partial_v F(u,v)&&\qquad \text{in $I$,}
   \end{aligned}
\right.
\end{equation}
where
\begin{equation}
\label{sec:limit-system-1-0}
I = \R \qquad \text{or}\qquad I = (0,\infty).
\end{equation}
We need to introduce some notation. Let $(u,v) \in L^\infty(I,\R^2) \cap C^2(I,\R^2)$ be a fixed solution of (\ref{eq:limit-system-rescaling-without-bc}). We consider the quadratic form
$q_{u,v}$ on $C^1_c(I,\R^2)$ defined by
$$
q_{u,v}(\phi) := \int_{I} \bigl(|\phi'|^2 - \bigl \langle D^2 F(u,v) \phi, \phi  \bigr \rangle
\bigr)\,dt
$$
If $\Omega \subset I$ is an open subset, we say that $(u,v)$ is {\em stable in $\Omega$} if $q_{u,v}(\phi,\psi) \ge 0$ for all $\phi,\psi \in C^1_c(\Omega).$ Moreover, we say that $(u,v)$ is stable outside a compact set if $(u,v)$ is stable in $I \setminus K$ for some compact set $K \subset I$. We then have the following nonexistence result.

\begin{theorem}
\label{sec:limit-system-1}
Let $I$ satisfy (\ref{sec:limit-system-1-0}), and let $(u,v) \in L^\infty(I,\R^2) \cap C^2(I,\R^2)$ be a solution of (\ref{eq:limit-system-rescaling-without-bc}) which is stable outside a compact set.\\
Then $u  \equiv v \equiv 0$.
\end{theorem}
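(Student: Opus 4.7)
The strategy is a one-dimensional Farina-type stability argument, concluded by conservation of the natural Hamiltonian; the crucial input is the super-quadratic coercivity~(\ref{F-consequence-3}) supplied by assumption~(F).

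First, I would test the stability inequality with $\phi = (u\eta, v\eta)$ for a cutoff $\eta \in C_c^1(I \setminus K)$, and eliminate the kinetic terms $\int(u'^2+v'^2)\eta^2 + 2\int(uu'+vv')\eta\eta'$ using the identity obtained by multiplying the equations in~(\ref{eq:limit-system-rescaling-without-bc}) by $u\eta^2$ and $v\eta^2$ respectively and integrating by parts. This yields
\beq
\int_I \eta^2\Bigl[\langle D^2F(u,v)(u,v),(u,v)\rangle - \partial_u F(u,v)u - \partial_v F(u,v)v\Bigr]\,dt \le \int_I (u^2+v^2)(\eta')^2\,dt.
\eeq
By~(\ref{F-consequence-3}) the bracket is at least $p(p-2)c_F(|u|^p+|v|^p)$. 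Choosing $\eta = \eta_R \chi$, with $\chi$ a fixed smooth cutoff vanishing in a neighborhood of $K$ and $\eta_R$ a plateau cutoff at scale $R$ satisfying $|\eta_R'|\le C/R$, the right-hand side stays bounded uniformly in $R$ because $(u,v)\in L^\infty$. Letting $R\to\infty$ yields $(u,v)\in L^p(I,\R^2)$.

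Next, using only the equation, I would multiply the $u$-equation by $u\eta_R^2$, integrate, and absorb the cross term via Young's inequality to obtain $\tfrac12\int u'^2\eta_R^2\,dt \le \int \partial_u F(u,v)u\,\eta_R^2\,dt + 2\int u^2(\eta_R')^2\,dt$. Since $\partial_u F$ is $(p-1)$-homogeneous, $|\partial_u F(u,v)u|\le C(|u|^p+|v|^p)$, so the first integral is bounded uniformly in $R$ by the previous step while the second tends to $0$; hence $u', v' \in L^2(I)$. The Hamiltonian $H(t) := \tfrac12(u'(t)^2+v'(t)^2)+F(u(t),v(t))$ satisfies $H' = u'(u''+\partial_u F(u,v)) + v'(v''+\partial_v F(u,v)) \equiv 0$, and $H \in L^1(I)$ by the $p$-homogeneity of $F$. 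A constant function that is integrable on a set of infinite Lebesgue measure must vanish, so $H \equiv 0$. The nonnegativity of both summands (using~(\ref{eq:consequence-F-0})) then forces $F(u,v)\equiv 0$, and assumption~(F) gives $(u,v)\equiv 0$.

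The main technical point is to choose the cutoff $\eta_R\chi$ admissibly so that the stability test function lies in the form domain of $q_{u,v}$. On the half-line $I=(0,\infty)$ one can simply take $\eta_R$ supported in $[1, 2R]$; then the behavior at $t = 0$ is irrelevant, and $H \in L^1([1,\infty))$ is enough to force $H \equiv 0$ throughout $I$. Alternatively, Lemma~\ref{approx-lemma} can be invoked to justify testing with functions that approach $(u,v)$ globally.
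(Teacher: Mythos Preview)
Your argument is correct, and both proofs begin with the same computation: testing stability with $(u\eta,v\eta)$ and using the equation to reach the inequality
\[
p(p-2)c_F\int_I(|u|^p+|v|^p)\eta^2\,dt\le\int_I(u^2+v^2)(\eta')^2\,dt,
\]
which is exactly~(\ref{eq:psi-est}). From this point the two arguments diverge. The paper substitutes $\psi=\phi^{p/(p-2)}$ and applies Young's inequality to obtain the scale-invariant bound $\int(|u|^p+|v|^p)\phi^{2p/(p-2)}\le C\int|\phi'|^{2p/(p-2)}$; it then invokes Lemma~\ref{sec:limit-system-2} (which uses Hamiltonian conservation to produce a sequence $r_n\to\infty$ along which the $L^p$-mass stays bounded below) and evaluates the scale-invariant bound on small translated cutoffs to reach a contradiction. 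You instead let $\eta=\eta_R\chi$ be a large plateau cutoff, use the $L^\infty$ bound to keep the right-hand side uniformly bounded, pass to the limit to get $(u,v)\in L^p(I)$, bootstrap via the equation to $u',v'\in L^2(I)$, and then observe that the conserved Hamiltonian $H=\tfrac12(u'^2+v'^2)+F(u,v)$ lies in $L^1$ of an infinite-measure set and must therefore vanish identically.

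Your route is somewhat more direct: it avoids both the Farina exponent substitution and the auxiliary Lemma~\ref{sec:limit-system-2}, trading these for the observation that a constant integrable function on an unbounded interval is zero. The paper's approach, on the other hand, yields a localized estimate that is closer in spirit to Liouville theorems in higher dimension (where global $L^p$ integrability is not so cheaply obtained), but in the present one-dimensional bounded setting your shortcut works cleanly. One small cosmetic point: in the half-line case your claim that the term $\int u^2(\eta_R')^2$ ``tends to $0$'' depends on the precise choice of $\eta_R$ near the fixed endpoint; it is enough that this term stays bounded, which it does, and Fatou's lemma still gives $u'\in L^2$.
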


\begin{remark}
For a solution $(u,v) \in \bigl[L^\infty(I) \cap C^2(I)\bigr]^2$ of (\ref{eq:limit-system-rescaling-without-bc}), one may define the Morse index as the maximal $k \in \N \cup \{0,\infty)$ such that there exists a $k$-dimensional subspace $X \subset C^1_c(I)\times C_c^1(I)$ with the property that
$$
q_{u,v}(\phi,\psi) < 0 \qquad \text{for every $(\phi,\psi) \in X  \setminus \{0\}$.}
$$
A standard and straightforward argument shows that a solution
$(u,v) \in \bigl[L^\infty(I) \cap C^2(I)\bigr]^2$ of (\ref{eq:limit-system-rescaling-without-bc}) with finite Morse index is stable outside a suitable compact set $K \subset I$. Therefore the conclusion of Theorem~\ref{sec:limit-system-1} also applies to solutions with finite Morse index. In the present paper, we apply Theorem~\ref{sec:limit-system-1} only to the case where $(u,v)$ is stable, i.e., where $q_{u,v}(\phi,\psi) \ge 0$ for all $\phi,\psi \in C^1_c(I).$
\end{remark}

For the proof of Theorem~\ref{sec:limit-system-1}, we first need the following observation.

\begin{lemma}
\label{sec:limit-system-2}
Let $I$ satisfy (\ref{sec:limit-system-1-0}), and let $(u,v) \in L^\infty(I,\R^2) \cap C^2(I,\R^2)$ be a solution of (\ref{eq:limit-system-rescaling-without-bc}) with $(u,v) \not \equiv (0,0)$. Then there exists $\eps,\delta>0$ and a sequence $(r_n)_n \subset I$ with $r_n \to \infty$ as $n \to \infty$ and
\begin{equation}
  \label{eq:r-n-eps-delta-est}
\int_{r_n-\eps}^{r_n+\eps}(|u|^p+|v|^p)\,dt \ge \delta.
\end{equation}
\end{lemma}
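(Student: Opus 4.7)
The plan is to argue by contradiction via conservation of the Hamiltonian energy. Multiplying the first equation of \eqref{eq:limit-system-rescaling-without-bc} by $u'$ and the second by $v'$, then adding and using $\partial_u F(u,v)\,u' + \partial_v F(u,v)\,v' = \frac{d}{dt}F(u,v)$, the quantity
\[ E(t):=\tfrac{1}{2}\bigl((u')^2+(v')^2\bigr)+F(u,v) \]
is constant on $I$; let $E_0\ge 0$ denote its value. Assumption (F) gives $F\ge 0$, so $E_0=0$ would force $F(u,v)\equiv 0$, and positivity of $F$ on $\R^2\setminus\{0\}$ would then yield $(u,v)\equiv 0$, contradicting the hypothesis. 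Hence $E_0>0$, and the pointwise bound $(u')^2+(v')^2\le 2E_0$ holds on $I$.

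Suppose now, for contradiction, that the conclusion of the lemma fails. Then for every $\varepsilon>0$ there is a sequence $r_m\to\infty$ with $\int_{r_m-\varepsilon}^{r_m+\varepsilon}(|u|^p+|v|^p)\,dt\to 0$. Since $F$ is $p$-homogeneous and continuous, $F(a,b)\le C(|a|^p+|b|^p)$ for some constant $C$, so the same limit holds with $F(u,v)$ in place of $|u|^p+|v|^p$. Integrating $E\equiv E_0$ over $[r_m-\varepsilon,r_m+\varepsilon]$ yields
\[ \int_{r_m-\varepsilon}^{r_m+\varepsilon}\tfrac{1}{2}\bigl((u')^2+(v')^2\bigr)\,dt = 2\varepsilon E_0 - \int_{r_m-\varepsilon}^{r_m+\varepsilon}F(u,v)\,dt \longrightarrow 2\varepsilon E_0, \]
which is the largest possible value of this integral, since $g:=(u')^2+(v')^2\le 2E_0$ everywhere.

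Boundedness of $(u,v)$ together with $F\in C^2$ implies $u''=-\partial_u F(u,v)$ and $v''=-\partial_v F(u,v)$ are bounded; hence $u',v'$ are globally Lipschitz on $I$ with some constant $K$, and so is $g$. Combined with $g\le 2E_0$ and $\int_{r_m-\varepsilon}^{r_m+\varepsilon}(2E_0-g)\,dt\to 0$, a standard Lipschitz mean-value argument yields: for every $\eta>0$, one has $g(t)\ge 2E_0-\eta$ for all $t\in[r_m-\varepsilon/2,\,r_m+\varepsilon/2]$ and all sufficiently large $m$. By \eqref{eq:consequence-F-0} this forces $F(u,v)\le \eta/2$ on that interval, and hence $|u|+|v|\le c(\eta)$ there, with $c(\eta)\to 0$ as $\eta\to 0$.

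The contradiction then comes from pairing this smallness of $(u,v)$ with the lower bound $g(r_m)\ge 2E_0-\eta$: the latter implies, say, $|u'(r_m)|\ge \tfrac{1}{\sqrt{2}}\sqrt{E_0-\eta/2}$, and Lipschitz control of $u'$ forces $|u'|\ge \tfrac{1}{2\sqrt 2}\sqrt{E_0-\eta/2}$ on a subinterval of fixed positive length (independent of $m$) around $r_m$; consequently $u$ varies monotonically by a definite positive amount there, which is incompatible with $|u|\le c(\eta)$ once $\varepsilon$ is chosen large enough (relative to $\sqrt{E_0}/K$) and $\eta$ is chosen small enough (relative to $E_0$). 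The main technical obstacle is the conversion of the integral statement into the uniform lower bound $g\ge 2E_0-\eta$; everything hinges on the Lipschitz control of $u'$ and $v'$, which is afforded by the $C^2$ regularity of $F$ together with the $L^\infty$ bound on $(u,v)$.
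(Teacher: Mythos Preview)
Your proof is correct, and it reaches the contradiction by a somewhat different mechanism than the paper's.

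Both arguments start from the conserved Hamiltonian $E\equiv E_0>0$. The paper then proceeds in two steps: first, it assumes $u(t),v(t)\to 0$ and, by integrating the system over intervals $[t_n,t_{n+1}]$ with $(u^2+v^2)'(t_n)\to 0$, produces points $s_n$ where also $(u')^2+(v')^2\to 0$, contradicting $E_0>0$; second, from the resulting pointwise non-vanishing $|u(r_n)|^p+|v(r_n)|^p\ge 2\delta$ it upgrades to the integral bound via the Lipschitz bound on $(u,v)$ coming from $u'',v''\in L^\infty$. Your route attacks the integral statement directly: the negation forces $\int_{r_m-\eps}^{r_m+\eps}(2E_0-g)\to 0$ with $g=(u')^2+(v')^2\le 2E_0$, and the Lipschitz control of $g$ converts this into the uniform bound $g\ge 2E_0-\eta$ on sub-intervals; this simultaneously makes $(u,v)$ small (since $F=E_0-g/2$) and keeps one derivative, say $u'$, bounded away from zero, so $u$ must vary by a definite amount on an interval where it is forced to stay small. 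The paper's contradiction is ``both $(u,v)$ and $(u',v')$ small somewhere'', yours is ``$(u,v)$ small but $u'$ large on an interval''.

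Two minor remarks. The actual negation of the conclusion is stronger than what you write: for each fixed $\eps$ it gives $\int_{r-\eps}^{r+\eps}(|u|^p+|v|^p)\,dt\to 0$ as $r\to\infty$ (not merely along some sequence), though your weaker version is of course implied and suffices. And the line ``$|u'(r_m)|\ge\ldots$'' should be ``at least one of $|u'(r_m)|,|v'(r_m)|\ge\ldots$''; passing to a subsequence (or arguing at a single large $m$) handles this without difficulty.
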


\begin{proof}
By (\ref{eq:limit-system-rescaling-without-bc}), we see that the function
$$
E:= \frac{1}{2}\Bigl(u'^2+v'^2\Bigr) + F(u,v)
$$
is constant on $I$. Let $c_{u,v}$ denote the constant value of this function. Then $c_{u,v}>0$ by assumption (F) and since $(u,v) \not \equiv (0,0)$. Since $u^2+v^2$ is a bounded function on $I$, there exists a sequence $(t_n)_n \subset I$ with $t_{n+1} \ge t_n +1$ for every $n \in \N$ and
$$
u'(t_n)u(t_n)+ v'(t_n)v(t_n)= \frac{d}{dt}\Big|_{t_n}\Bigl(u^2+v^2\Bigr) \to 0 \qquad \text{as $n \to \infty$.}
$$
Suppose by contradiction that
\begin{equation}
  \label{eq:contradiction-decay}
u(t),v(t) \to 0 \qquad \text{as $t \to \infty$.}
\end{equation}
Multiplying (\ref{eq:limit-system-rescaling-without-bc}) with $u$, $v$ respectively and integrating by parts over $(t_n,t_{n+1})$,  we then find that
$$
\int_{t_n}^{t_{n+1}}\Bigl(u'^2+v'^2\Bigr)\,dt= o(1) + \int_
{t_n}^{t_{n+1}}F(u,v)\,dt = o(|t_{n+1}-t_n|)
$$
as $n \to \infty$. Thus there exists numbers $s_n \in (t_n,t_{n+1})$, $n \in \N$ with
$$
u'(s_n)^2 + v'(s_n)^2 \to 0 \qquad \text{as $n \to \infty$.}
$$
Moreover, $u(s_n),v(s_n) \to 0$ as $n \to \infty$ by (\ref{eq:contradiction-decay}). By definition of $E$, this contradicts the fact that $c_{u,v}>0$. Hence (\ref{eq:contradiction-decay}) is false, and so there exists $\delta>0$ and a sequence $(r_n)_n \subset I$ with $r_n \to \infty$ and
$$
|u|^p(r_n) + |v|^p(r_n) \ge 2 \delta \qquad \text{for all $n \in \N$.}
$$
Moreover, since $u'', v'' \in L^\infty(I)$ as a consequence of (\ref{eq:limit-system-rescaling-without-bc}), we may choose $\eps>0$ sufficiently close such that
$$
\int_{r_n- \eps}^{r_n+\eps}(|u|^p+|v|^p)\,dx \ge \delta \qquad \text{for all $n \in \N$,}
$$
as claimed.
\end{proof}

\begin{proof}[Proof of Theorem~\ref{sec:limit-system-1}]
We suppose by contradiction that $(u,v) \not \equiv (0,0)$. Let $\psi \in C^1_c(I \setminus K)$. The stability assumption applied to $(u \psi, v \psi) \in C^1_c(I \setminus K,\R^2)$ then yields
\begin{align}
&0 \le  \int_{I} \Bigl([(\psi u)']^2 + [(\psi v)']^2
- \Bigl \langle D^2 F(u,v)  {\psi u \choose \psi v}, {\psi u \choose \psi v} \Bigr) \rangle 
\Bigr)\,dt \nonumber\\
&= \int_{I} \Bigl( u'(\psi^2 u)' +  v' (\psi^2 v)' +(u^2+v^2) [\psi']^2 
- \Bigl \langle D^2 F(u,v)  {\psi u \choose \psi v}, {\psi u \choose \psi v} \Bigr \rangle
\Bigr)dt\nonumber\\
&= \int_{I} \Bigl([-u''- \partial_u F(u,v)](\psi^2 u)  +
[-v''- \partial_v F(u,v)](\psi^2 v)+ (u^2+v^2) [\psi']^2\Bigr)dt\nonumber\\
&\qquad -\int_I \psi^2 \Bigl(\,\Bigl\langle D^2 F(u,v) {u \choose v}, {u \choose v} \Bigr \rangle - \partial_u F(u,v)u- \partial_v F(u,v)v\Bigr)\,dt\nonumber\\
&\le \int_I (u^2+v^2) [\psi']^2\,dt - p (p-2)c_F \int_I  (|u|^p   + |v|^p)\psi^2 \,dt,  \label{eq:psi-est}
\end{align}
where in the last step (\ref{F-consequence-3}) and (\ref{eq:limit-system-rescaling-without-bc}) were used.
Now, for arbitrary $\phi \in C^1_c(I \setminus K)$, we apply (\ref{eq:psi-est}) to $\psi= \phi^{\frac{p}{p-2}}$ to get
$$
\int_I \Bigl(|u|^p + |v|^p\Bigr) \phi^{\frac{2p}{p-2}}\,dt \le \frac{p}{c_F(p-2)^3} \int_I (u^2+v^2)\phi^{\frac{4}{p-2}}[\phi']^2\,dt\\
$$
Combining this with Young's inequality yields, for $\tau>0$,
\begin{align*}
\int_I &\bigl(|u|^p + |v|^p\bigr) \phi^{\frac{2p}{p-2}}\,dt\\
& \le \frac{2\,\tau^{\frac{p}{2}} }{c_F(p-2)^3}\int_I \bigl(|u|^p + |v|^p\bigr) \phi^{\frac{2p}{p-2}}\,dt + \frac{2}{c_F(p-2)^2 \tau^{\frac{2p}{p-2}}}\int_{I} |\phi'|^{\frac{2p}{p-2}}\,dt.
\end{align*}
Choosing $\tau>0$ such that $\frac{2 \,\tau^{\frac{p}{2}}}{c_F(p-2)^3}= \frac{1}{2}$, we thus conclude that
\begin{equation}
  \label{eq:phi-est}
\int_I \bigl(|u|^p + |v|^p\bigr) \phi^{\frac{2p}{p-2}}\,dt \le C_\tau \int_{I} |\phi'|^{\frac{2p}{p-2}}\,dt
\end{equation}
with $C_\tau := \frac{4 \tau^{\frac{2p}{2-p}}}{c_F(p-2)^2}$. Next, let $\phi_0 \in C^1_c(\R)$ satisfy
$$
0 \le \phi_0 \le 1, \quad \text{$\phi_0 \equiv 1$ on $[-1,1]$}\quad \text{and}\quad \text{$\phi_0 \equiv 0$ on $\R \setminus [-2,2]$.}
$$
For $\rho>0$ and $r \in \R$, we then consider
$$
\phi_{\rho,r} \in C^1_c(\R),\qquad \phi_{\rho,r}(t)= \phi(\rho(t-r)),
$$
so that
$$
\int_\R [\phi_{\rho,r}'(t)]^{\frac{2p}{p-2}}\,dt = \rho^{\frac{2p}{p-2}} \int_{\R} [\phi'(\rho(t-r))]^{\frac{2p}{p-2}}\,dt =
\rho^{\frac{p+2}{p-2}} \int_{\R}\phi_0\,dt.
$$
With $\eps,\delta>0$ given by Lemma~\ref{sec:limit-system-2}, we may now fix $\rho<\frac{1}{\eps}$ sufficiently small such that
\begin{equation}
  \label{eq:delta-a-min-est}
\int_\R [\phi_{\rho,r}'(t)]^{\frac{2p}{p-2}}\,dt < \frac{\delta}{C_\tau} \qquad \text{for all $r \in \R$.}
\end{equation}
Since $r_n \to \infty$ for the sequence $(r_n)_n$ given by Lemma~\ref{sec:limit-system-2}, there exists $n_0 \in \N$ such that
$$
\supp \, \phi_{\rho,r_n} \subset I \setminus K \qquad \text{for $n \ge n_0$.}
$$
Since moreover $\phi_{\rho,r_n} \equiv 1$ on $[r_n-\eps,r_n+\eps]$ by our choice of $\rho$, we can use (\ref{eq:phi-est}) and (\ref{eq:delta-a-min-est}) to estimate that
$$
\int_{r_n-\eps}^{r_n+\eps}(|u|^p+|v|^p)\,dt \le \int_I (|u|^p+|v|^p)\phi_{\rho,r_n}^{\frac{2p}{p-2}}
\,dt \le C_\tau \int_\R [\phi_{\rho,r_n}'(t)]^{\frac{2p}{p-2}}\,dt
< \delta
$$
for $n \ge n_0$, contrary to (\ref{eq:r-n-eps-delta-est}). The contradiction shows that $u \equiv v \equiv 0$, as claimed.
\end{proof}

We close this section with estimates for a more general ODE system which arises when studying radial solutions of
(\ref{eq:henon-1-zh}) after a transformation. These estimates will be used in Proposition~\ref{sec:morse-index-system} below.

\begin{lemma}
\label{pohozaev-transformed}
Let $\gamma >0$, $\rho \ge 0$, $N \ge \frac{p\rho}{2} + \frac{(p-2)\gamma}{2}$ and $\nu_1, \nu_2 \ge 0$ be constants. Moreover, let
$(u,  v) \in C^2([0,\infty),\R^2)$ be a bounded solution of the system
\begin{equation}
  \label{eq:henon-transformed-0}
\left \{
 \begin{aligned}
-\bigl(e^{-\gamma t} u'\bigr)' + \nu_1 e^{-\rho t} u  &= e^{-N t}\partial_u F(u,v)&&\quad \text{in $(0,\infty)$,}\\
-\bigl(e^{-\gamma t} v'\bigr)' + \nu_2 e^{-\rho t} v  &= e^{-N t}\partial_v F(u,v)&&\quad \text{in $(0,\infty)$,}\\
u (0)&=v(0)=0.
    \end{aligned}
 \right.
\end{equation}
Then we have
\begin{equation}
\label{pohozaev-transformed-1}
[u'(0)]^2+ [v'(0)]^2 \ge \frac{2(N+\gamma)}{p} \int_0^\infty \Bigl([(e^{-\gamma t}u)'\,]^2 + [(e^{-\gamma t}v)'\,]^2\Bigr)\,dt.
\end{equation}
Moreover, if $\gamma \le \frac{N}{3p}$, then there exists a constant $C>0$ depending only on $F,p$ and $N$ such that
\begin{equation}
\label{pohozaev-transformed-lower-bound}
[u'(0)]^2+ [v'(0)]^2 \ge C \qquad \text{if $(u,v) \not \equiv (0,0)$.}
\end{equation}
\end{lemma}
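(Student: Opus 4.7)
My plan is to derive and combine two integral identities obtained from natural multipliers applied to the system, then exploit the hypothesis $\gamma \le N/(3p)$ through a Hardy-type bound to get the lower bound in the second assertion.

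Set $w := e^{-\gamma t} u$ and $z := e^{-\gamma t} v$; note $w(0)=z(0)=0$ and $e^{-\gamma t}u' = w' + \gamma w$. Multiplying the $u$-equation by $w$, the $v$-equation by $z$, summing and integrating over $(0,\infty)$, and using $pF(u,v) = \partial_u F(u,v)u + \partial_v F(u,v) v$ from (F), I obtain the energy identity
$$
\int_0^\infty \bigl([w']^2+[z']^2\bigr)\,dt + \int_0^\infty e^{-(\rho+\gamma)t}(\nu_1 u^2+\nu_2 v^2)\,dt = p\int_0^\infty e^{-(N+\gamma)t}F(u,v)\,dt.
$$
Next, I multiply the $u$-equation by $e^{-\gamma t}u'$ and the $v$-equation by $e^{-\gamma t}v'$. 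The leading contribution equals $-\tfrac12 \frac{d}{dt}(e^{-\gamma t}u')^2$, integrating to $\tfrac12 [u'(0)]^2$ (the boundary term at infinity vanishes, see below); the middle term becomes $\tfrac{\nu_1(\rho+\gamma)}{2}\int e^{-(\rho+\gamma)t}u^2\,dt$ after integration by parts using $u(0)=0$; and the summed right-hand sides equal $\int e^{-(N+\gamma)t}\frac{d}{dt}F(u,v)\,dt = (N+\gamma)\int e^{-(N+\gamma)t}F(u,v)\,dt$, where $F(0,0)=0$ follows from $p$-homogeneity. This yields the Pohozaev identity
$$
\tfrac12\bigl([u'(0)]^2+[v'(0)]^2\bigr) + \tfrac{\rho+\gamma}{2}\int_0^\infty e^{-(\rho+\gamma)t}(\nu_1 u^2+\nu_2 v^2)\,dt = (N+\gamma)\int_0^\infty e^{-(N+\gamma)t}F(u,v)\,dt.
$$
Eliminating the $F$-integral between the two identities produces
$$
[u'(0)]^2+[v'(0)]^2 = \tfrac{2(N+\gamma)}{p}\!\int_0^\infty\!([w']^2+[z']^2)\,dt + \tfrac{2N-p\rho-(p-2)\gamma}{p}\!\int_0^\infty\! e^{-(\rho+\gamma)t}(\nu_1 u^2+\nu_2 v^2)\,dt,
$$
and the last coefficient is nonnegative precisely under the hypothesis $N \ge \tfrac{p\rho}{2}+\tfrac{(p-2)\gamma}{2}$. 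Dropping the nonnegative terms yields (\ref{pohozaev-transformed-1}).

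For (\ref{pohozaev-transformed-lower-bound}) I bound the $F$-integral from above by a power of $A := \|w'\|_{L^2}^2 + \|z'\|_{L^2}^2$. By (F), $F(u,v) \le C_F'(|u|^p+|v|^p)$, and the substitution $u = e^{\gamma t}w$, $v = e^{\gamma t}z$ converts the right-hand side of the energy identity into a constant times $\int_0^\infty e^{-at}(|w|^p+|z|^p)\,dt$ with $a := N-(p-1)\gamma$. The hypothesis $\gamma \le N/(3p)$ gives $a \ge 2N/3>0$. Since $w(0)=0$, Lemma~\ref{approx-lemma} provides $|w(t)|^2 \le t\|w'\|_{L^2}^2$, and hence $|w(t)|^p \le t^{p/2}\|w'\|_{L^2}^p$; integrating against $e^{-at}$ yields $\int_0^\infty e^{-at}|w|^p\,dt \le \Gamma(p/2+1)\,a^{-(p/2+1)}\|w'\|_{L^2}^p$ and similarly for $z$, with constants depending only on $F, N, p$. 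Combined with the energy identity and the elementary inequality $x^p + y^p \le (x^2+y^2)^{p/2}$ (valid for $p \ge 2$), this gives $A \le C_{F,N,p}\, A^{p/2}$. Since $p>2$, either $A=0$---forcing $w\equiv z\equiv 0$ and hence $u\equiv v\equiv 0$, contradicting nontriviality---or $A \ge \kappa$ for some $\kappa>0$ depending only on $F, N, p$. Combining with (\ref{pohozaev-transformed-1}) yields (\ref{pohozaev-transformed-lower-bound}).

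The main technical obstacle is the rigorous justification of the vanishing boundary-at-infinity terms in both integrations by parts. Boundedness of $(u,v)$ directly gives $e^{-\gamma T}w(T),e^{-\gamma T}z(T)\to 0$ and $e^{-(N+\gamma)T}F(u(T),v(T)) \to 0$. The delicate term is the Pohozaev boundary $(e^{-\gamma T}u'(T))^2$: from the equation, $(e^{-\gamma t}u')' = \nu_1 e^{-\rho t}u - e^{-Nt}\partial_u F(u,v)$ lies in $L^1(0,\infty)$ when $\rho>0$, and even when $\rho=0$ one first derives an $L^2$-bound on $w'$ from the energy identity on $[0,T]$ to obtain the required integrability. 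Hence $e^{-\gamma t}u'$ has a limit $L$ at infinity; if $L\neq 0$ then $u'(t) \sim Le^{\gamma t}$ and $u$ grows exponentially, contradicting boundedness. Therefore $L=0$ and all boundary terms vanish, making both identities rigorous.
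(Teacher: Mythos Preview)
Your proof is correct and follows essentially the same route as the paper: both derive the Pohozaev-type identity by multiplying the equations by $e^{-\gamma t}u'$, $e^{-\gamma t}v'$ and the energy identity by multiplying by $e^{-\gamma t}u$, $e^{-\gamma t}v$, then combine them; for the second part both invoke the pointwise Hardy bound $|w(t)|^2\le t\|w'\|_{L^2}^2$ (Lemma~\ref{approx-lemma}) to dominate the $F$-integral by $A^{p/2}$ and conclude $A\ge\kappa>0$. Your presentation is in fact somewhat cleaner---you obtain an exact identity with the explicit coefficient $\frac{2N-p\rho-(p-2)\gamma}{p}$ before dropping the nonnegative remainder, and you treat the boundary terms at infinity explicitly, which the paper leaves implicit.
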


\begin{proof}
By (\ref{eq:henon-transformed-0}), we have that
\begin{align*}
&\frac{1}{2}\Bigl([u'(0)]^2+ [v'(0)]^2\Bigr) = -\frac{1}{2}\int_0^\infty \partial_t \Bigl([e^{-\gamma t} u']^2+[e^{-\gamma t}v']^2\Bigr) \,dt\\
&= -\int_0^\infty \Bigl(e^{-\gamma t}u' (e^{-\gamma t} u')'+ e^{-\gamma t}v'
(e^{-\gamma t}v')\Bigr)\,dt\\
&= \!\int_0^\infty\!\!\! e^{-\gamma t}\Bigl( e^{-Nt}\bigl(\partial_u F(u,v)u' + \partial_v F(u,v)v'\bigr)-e^{- \rho t}(\nu_1 uu'+\nu_2 vv')\!\Bigr)dt\\
&= \int_0^\infty \Bigl[e^{-(N+\gamma)t} [F(u,v)]'-\frac{e^{-(\rho+ \gamma)t}}{2}(\nu_1 u^2+\nu_2 v^2)'\Bigr]\,dt\\
&= (N+\gamma) \int_0^\infty e^{-(N+\gamma)t} F(u,v)\,dt -  \frac{\rho+ \gamma}{2} \int e^{-(\rho+ \gamma)t} (\nu_1 u^2+\nu_2 v^2) \,dt\\
& \ge (N+\gamma) \int_0^\infty e^{-\gamma t}\Bigl[e^{-Nt}F(u,v)- \frac{e^{-\rho t}}{p}(\nu_1 u^2+\nu_2 v^2)\Bigr]\,dt,
\end{align*}
where in the last step we used the assumption $N \ge \frac{p\rho}{2} + \frac{(p-2)\gamma}{2}$. Multiplying (\ref{eq:henon-transformed-0}) by $u$ resp. $v$ and integrating, we thus find that
\begin{align}
&\int_0^\infty \Bigl([(e^{-\gamma t}u)'\,]^2 + [(e^{-\gamma t}v)'\,]^2\Bigr)\,dt \nonumber \\
&=\int_0^\infty e^{-\gamma t}\bigl[\bigl(e^{-\gamma t} u'\bigr)'u + \bigl(e^{-\gamma t} v'\bigr)'v\bigr]\,dt \nonumber \\
&=\int_0^\infty e^{-\gamma t}\Bigl[e^{-Nt}(\partial_u F(u,v)u + \partial_v F(u,v)v)- e^{-\rho t}(\nu_1 u^2+\nu_2 v^2)\Bigr]\,dt \nonumber\\
&=\int_0^\infty e^{-\gamma t}\Bigl[p e^{-Nt} F(u,v)- e^{-\rho t}(\nu_1 u^2+\nu_2 v^2)\Bigr]\,dt \label{compactness-add-ref}\\
&\le \frac{p}{2(N+\gamma)} \Bigl([u'(0)]^2+ [v'(0)]^2\Bigr), \nonumber
\end{align}
as claimed in (\ref{pohozaev-transformed-1}). Here we used (\ref{F-consequence-1}). Moreover, for $t \ge 0$ we have
$$
e^{- \gamma t} u(t)
=  \int_{0}^t (e^{-\gamma s} u)'\,ds \le \sqrt{t} \Bigl(\int_0^\infty [(e^{-\gamma s} u)'\,]^2\,ds\Bigr)^{\frac{1}{2}}
$$
and thus, if $\gamma \le \frac{N}{3p}$,
\begin{align*}
&e^{- \frac{4N}{3p}t} u^2(t) =   e^{-(\frac{4N}{3p}+2\gamma)t}[e^{- \gamma t} u(t)]^2 \le
te^{-(\frac{4N}{3p}+2\gamma)t} \int_0^\infty [(e^{-\gamma s} u)'\,]^2\,ds\\
&\le t e^{-\frac{2N}{3p}t} \int_0^\infty [(e^{-\gamma s} u)\,]'^2\,ds \le C_{N,p} \int_0^\infty [(e^{-\gamma s} u)'\,]^2\,ds
\end{align*}
with $C_{N,p}:= \max \limits_{t \ge 0}\bigl(t e^{-\frac{2N}{3p}t}\bigr)$. The same estimate holds for $v$, and then by (\ref{compactness-add-ref}) we get
\begin{align}
\int_0^\infty &\Bigl([(e^{-\gamma t}u)'\,]^2 + [(e^{-\gamma t}v)'\,]^2\Bigr)\,dt \le p \int_0^\infty e^{-(N+\gamma) t}F(u,v)\,dt
\nonumber\\
&\le p C_F \int_0^\infty e^{- N t} \Bigl(|u(t)|^2+|v(t)|^2\Bigr)^{\frac{p}{2}}\,dt \nonumber \\
&= p C_F \int_0^\infty e^{- \frac{N}{3} t} \Bigl(e^{-\frac{4N}{3p} t} |u(t)|^2+e^{-\frac{4N}{3p} t}|v(t)|^2\Bigr)^{\frac{p}{2}}\,dt \nonumber\\
&\le p C_F {C_{N,p}}^{\frac{p}{2}} \int_{0}^\infty e^{-\frac{N}{3} t}\,dt \Bigl[\int_0^\infty \Bigl([(e^{-\gamma t}u)'\,]^2 + [(e^{-\gamma t}v)'\,]^2\Bigr)\,dt \Bigr]^{\frac{p}{2}} \nonumber\\
&=\frac{3 p C_F {C_{N,p}}^{\frac{p}{2}}}{N}\Bigl[\int_0^\infty \Bigl([(e^{-\gamma t}u)'\,]^2 + [(e^{-\gamma t}v)'\,]^2\Bigr)\,dt \Bigr]^{\frac{p}{2}}.\nonumber
\end{align}
So if $(u,v) \not \equiv (0,0)$ we have
$$
\int_0^\infty \Bigl([(e^{-\gamma t}u)'\,]^2 + [(e^{-\gamma t}v)'\,]^2\Bigr)\,dt  \ge \Bigl(\frac{N}{3 p C_F {C_{N,p}}^{\frac{p}{2}}}\Bigr)^{\frac{2}{p-2}}.
$$
Combining this with (\ref{pohozaev-transformed-1}), we thus conclude that
$$
[u'(0)]^2+ [v'(0)]^2 \ge \frac{2 N}{p} \int_0^\infty \Bigl([(e^{-\gamma t}u)'\,]^2 + [(e^{-\gamma t}v)'\,]^2\Bigr)\,dt  \ge \frac{2 N}{p}\Bigl(\frac{N}{3 p C_F {C_{N,p}}^{\frac{p}{2}}}\Bigr)^{\frac{2}{p-2}},
$$
as claimed in (\ref{pohozaev-transformed-lower-bound}).
\end{proof}

\section{Symmetry breaking for the H{\'e}non-Schr\"{o}dinger system}
\label{sec:symm-break-henon}

The present section is completely devoted to the proof of Theorem~\ref{th1.4}. As noted in the introduction, Part (i) is a direct consequence of Part (ii), so it only remains to prove Part (ii).

Arguing by contradiction, we suppose that there exists $m>0$, a sequence of numbers $\alpha_k>0$ with $\alpha_k \to \infty$ for $k \to \infty$ and, for every $k$, a nontrivial radial solution $(\tilde u_k,\tilde v_k)$ of
\begin{equation}\label{eq:henon-1}
\left \{
 \begin{aligned}
 -\Delta \tilde u_k + \mu_1 \tilde u_k  &= |x|^{\alpha_k}\partial_u F(\tilde u_k,\tilde v_k)&&\qquad \text{in $\Omega$,}\\
 -\Delta \tilde v_k + \mu_2 \tilde v_k  &= |x|^{\alpha_k}\partial_v F(\tilde u_k,\tilde v_k)&&\qquad \text{in $\Omega$,}\\
u&=v=0&&\qquad \text{on $\partial \Omega$,}
    \end{aligned}
 \right.
\end{equation}
with $\alpha=\alpha_k$ such that
\begin{equation}
\label{eq:morse-upper-bound}
\mu(\tilde u_k,\tilde v_k) \le m \qquad \text{for all $k \in \N$.}
\end{equation}
Let $L_k:= L_{\tilde u_k,\tilde v_k}^{\alpha_k}$ denote the linearized operator at
$(\tilde u_k,\tilde v_k)$ as defined in (\ref{eq:linearized-operator}), i.e.,
$$
L_k \phi := -\Delta  { \phi_1 \choose \phi_2 } +
{\mu_1 \phi_1 \choose \mu_2 \phi_2 } -  |x|^{\alpha_k} D^2 F(\tilde u_k,\tilde v_k) \phi  .
$$
By (\ref{eq:morse-upper-bound}), $L_k$ has most $m$ negative Dirichlet eigenvalues. Let $\Delta_\theta$ denote
the Laplace-Beltrami-Operator on the unit sphere. In the following, we restrict our attention to eigenfunctions of the form
$$
x \mapsto \phi(x) = Y_{l}(\theta)w(r)\quad
\text{with}\quad  w(r)=  {w_1(r) \choose w_2(r)}
$$
for  $r = |x|$ and $\theta = \frac{x}{|x|}$. Here $Y_l$ is a
spherical harmonic of degree $\ell$, i.e. an eigenfunction of $-\Delta_\theta$ on the
unit sphere $\mathbb S$ corresponding to the eigenvalue $\lambda_\ell := l(l+N-2)$.
Then the problem
$$
L_k \phi   = \mu  \phi  \quad
\text{in $\Omega$},\qquad \phi = 0 \quad \text{on
$\partial \Omega$}
$$
reduces to
\begin{equation}
  \label{eq:radial-variable-linear-problem}
-\Delta_r  w  +
\frac{\lambda_\ell}{r^2}w - V_k (r) w = \mu w \quad \text{on
$[0,1]$}, \qquad w(1)=0,
\end{equation}
with
$$
V_k(r):= r^{\alpha_k}D^2 F(\tilde u_k,\tilde v_k)(r)  - \left (\begin{array}{cc}
\mu_1 & 0 \\
0 & \mu_2
\end{array}
 \right)
\quad  \in \R^{2 \times 2}\qquad \text{for $r \in [0,1]$.}
$$
We call $\mu \in \R$ an eigenvalue of (\ref{eq:radial-variable-linear-problem}) if there exists a solution $w \in C^2([0,1])\setminus \{0\}$ of (\ref{eq:radial-variable-linear-problem}). We claim that
\begin{equation}
  \label{eq:eigenvalue-radial-text}
  \begin{aligned}
&\text{for every $k \in \N$ there exists $\ell_k \in \{0,\dots,m\}$ such that}\\
&\text{(\ref{eq:radial-variable-linear-problem}) admits only nonnegative eigenvalues for $\ell = \ell_k$.}
  \end{aligned}
\end{equation}
Indeed, if this is not the case, then there exists $k \in \N$ and $w_1,\dots,w_m \in C^2((0,1)) \cap C([0,1]) \setminus \{0\}$ such that $w_j$ solves (\ref{eq:radial-variable-linear-problem}) with some eigenvalue $\mu=\mu_j<0$ for $0=1,\dots,m$. We may then pick a spherical harmonic $Y_j$ of degree $j$ for $j=0,\dots,m$ and define $v_j \in H^2(\Omega) \cap H^1_0(\Omega)$ in polar coordinates by $v_j(r,\theta)=Y_j(\theta) w_j(r)$. Then the functions $v_0,\dots,v_m$ are orthogonal in $L^2(\Omega)$, since the functions $Y_0,\dots,Y_m$ are orthogonal in $L^2(\mathbb S)$. Moreover, every $v_j$ is an eigenfunction of $L_k$ associated with a negative eigenvalue, and thus $L_k$ has at least $m+1$ negative eigenvalues. This contradicts (\ref{eq:morse-upper-bound}), and thus the claim~(\ref{eq:eigenvalue-radial-text}) is true.\\
As a consequence of (\ref{eq:eigenvalue-radial-text}), there exists $\ell \in \{0,\dots,m\}$ such that, after passing to a subsequence in $k$,
\begin{equation}
  \label{eq:eigenvalue-radial-text-1}
  \begin{aligned}
&\text{the eigenvalue problem (\ref{eq:radial-variable-linear-problem}) admits only}\\
&\text{nonnegative eigenvalues for every $k \in \N$.}
  \end{aligned}
\end{equation}
It is now convenient to use, inspired by Byeon-Wang \cite{B-W}, the change of variable
$r= e^{-\beta_k t}$ with $\beta_k = \frac{N}{N+\alpha_k}$, which leads to
$\partial_r = \frac{e^{\beta_k t}}{\beta_k} \partial_t$ and therefore
$$
\Delta_r = r^{1-N}\partial_r (r^{N-1} \partial_r) = \frac{e^{\beta_k N
t}}{\beta_k^{2}} \partial_t \Bigl(e^{(2-N) \beta_k t}\partial_t\Bigr).
$$
Hence, setting $\gamma_k:= (N-2) \beta_k$,
we see that the transformed functions
$$
u_k, v_k: [0,\infty) \to \R, \qquad u_k(t):=    \beta_k^{\frac{2}{p-2}} \tilde u_k (e^{-\beta_k t}), \quad v_k(t):=    \beta_k^{\frac{2}{p-2}} \tilde v_k (e^{-\beta_k
t})
$$
solve the system
\begin{equation}
  \label{eq:henon-transformed}
\left \{
 \begin{aligned}
-\bigl(e^{-\gamma_k t} {u_k}'\bigr)' + {\beta_k}^2\mu_1 e^{-{\beta_k} N t}{u_k}  &= e^{-N t}\partial_u F(u_k,v_k)&&\quad \text{in $(0,\infty)$,}\\
-\bigl(e^{-\gamma_k t} {v_k}'\bigr)' + {\beta_k}^2  \mu_2 e^{-{\beta_k} Nt}{v_k}  &= e^{-N t}\partial_v F(u_k,v_k)&&\quad \text{in $(0,\infty)$,}\\
{u_k} (0)&={v_k}(0)=0.
    \end{aligned}
 \right.
\end{equation}
Here and in the following, the prime stands for $\partial_t$. Moreover, setting
$$
h(t)= w(e^{-\beta_k t}) = {w_1(e^{-\beta_k t}) \choose
w_2(e^{-\beta_k t})}
$$
and putting $\lambda= \lambda_\ell \ge 0$, we see that (\ref{eq:radial-variable-linear-problem}) transforms into
\begin{equation}
  \label{eq:radial-variable-linear-problem-1}
-\bigl(e^{-\gamma_k t}h' \bigr)' +\beta_k^2 \lambda e^{-\gamma_k t} h- U_k(t) h = \mu \beta_k^2 e^{-\beta_k N t} h \quad \text{on
$[0,\infty)$}
\end{equation}
subject to the conditions
\begin{equation}
  \label{eq:radial-variable-linear-problem-1-cond}
h(0)=0,\qquad h(\infty)=\lim_{t \to \infty}h(t) \quad \text{exists,}
\end{equation}
where
\begin{align*}
U_k(t)&:=  \beta^2 e^{- N t} D^2 F(u_k,v_k)(e^{-\beta_k t}) - \beta_k^{2} e^{- \beta_k N  t} \left (\begin{array}{cc}
\mu_1 & 0 \\
0 & \mu_2
\end{array}
 \right)\\
&=  e^{- N t} D^2 F(u_k, v_k)(t) - \beta_k^{2} e^{- \beta_k N t} \left (\begin{array}{cc}
\mu_1 & 0 \\
0 & \mu_2
\end{array}
 \right)\;\in \R^{2 \times 2}\qquad \text{for $t \ge 0$.}
\end{align*}
Here we used the fact that the function $\R^2 \to \R,\; (u,v) \to D^2 F(u,v)$ is $(p-2)$-homogeneous, which follows easily from assumption $(F)$. We call $\mu \in \R$ an eigenvalue of (\ref{eq:radial-variable-linear-problem-1}),~(\ref{eq:radial-variable-linear-problem-1-cond}) if there exists a bounded solution
$h \in C^2([0,\infty)) \setminus \{0\}$ of (\ref{eq:radial-variable-linear-problem}) such that $~(\ref{eq:radial-variable-linear-problem-1-cond})$ holds. It then follows immediately from~(\ref{eq:eigenvalue-radial-text-1}) that the eigenvalue problem (\ref{eq:radial-variable-linear-problem-1}) admits only nonnegative eigenvalues for every $k \in \N$. Applying Lemma~\ref{sec:morse-index-system-help-lemma} from the Appendix for fixed $k$ with $\gamma= \gamma_k$, $\delta= N \beta_k$ and $U(t):= e^{\beta_k N t}U_k(t)$, we deduce that
\begin{equation}
  \label{eq:Q_k-nonnegative}
Q_k(\varphi) \ge 0 \qquad \text{for every $\varphi \in C_c^\infty((0,\infty),\R^2)$,}
\end{equation}
where
$$
Q_k(\varphi):= \int_{0}^\infty \Bigl(e^{-\gamma_k t} |\varphi'(t)|^2 + \lambda e^{-\gamma_k t}|\phi(t)|^2  - e^{-N t} \bigl \langle U_k(t)\varphi(t),\varphi(t) \bigr \rangle\Bigr)\,dt.
$$
Hence we may apply Proposition~\ref{sec:morse-index-system} below to deduce that $u_k \equiv v_k \equiv 0$ for $k$ sufficiently large. This is a contradiction.

Thus, the following Proposition completes the proof of Theorem~\ref{th1.4}.

\begin{proposition}
\label{sec:morse-index-system} Let $\lambda,\mu_1,\mu_2 \ge 0$ be constants, and let $\beta_k, \gamma_k >0$, $k \in \N$ with $\lim \limits_{k \to \infty} \beta_k = \lim \limits_{k \to \infty} \gamma_k= 0$. Moreover, for $k \in \N$,
let $(u_k,  v_k) \in C^2([0,\infty),\R)$ be solutions of the system
\begin{equation}
  \label{eq:henon-transformed-1}
\left \{
 \begin{aligned}
-\bigl(e^{-\gamma_k t}u_k' \bigr)' + \beta_k^2 e^{-\beta_k N t} \mu_1 u_k  &= e^{-N t}\partial_u F(u_k,v_k)&&\quad \text{in $(0,\infty)$,}\\
-\bigl(e^{-\gamma_k t}v_k' \bigr)' + \beta_k^2 e^{-\beta_k N t}\mu_2 v_k  &= e^{-N t}\partial_v F(u_k,v_k)&&\quad \text{in $(0,\infty)$,}\\
u_k (0)&=v_k(0)=0.
    \end{aligned}
 \right.
\end{equation}
Assume furthermore that
\begin{equation}
  \label{eq:stability-k}
Q_k(\varphi) \ge 0 \qquad \text{for every $\varphi \in C^1_c((0,\infty),\R^2)$, $k \in \N$,}
\end{equation}
where
$$
Q_k(\varphi):= \int_{0}^\infty \Bigl(e^{-\gamma_k t} |\varphi'(t)|^2 + \lambda \beta_k^2  e^{-\gamma_k t}|\varphi(t)|^2  - \bigl \langle U_k(t)\varphi(t),\varphi(t) \bigr \rangle\Bigr)\,dt
$$
and $U_k: [0, \infty) \to \R^{2 \times 2}$ is defined by
$$
U_k(t):=  e^{-Nt} D^2 F(u_k,v_k)(t) - \beta_k^{2}e^{-\beta_k N t}  \left (\begin{array}{cc}
\mu_1 & 0 \\
0 & \mu_2
\end{array}
 \right)
$$
Then $(u_k,v_k) \equiv 0$ for $k$ sufficiently large.
\end{proposition}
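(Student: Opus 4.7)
The plan is to argue by contradiction via a blow-up/rescaling analysis, ultimately producing a bounded nontrivial stable solution of the limit system~(\ref{eq:limit-system-rescaling-without-bc}) on $I=\R$ or $I=[0,\infty)$ and invoking Theorem~\ref{sec:limit-system-1}. Suppose therefore, after passing to a subsequence, that $(u_k,v_k)\not\equiv 0$ for all $k$. Since $\gamma_k\to 0$, eventually $\gamma_k\le\frac{N}{3p}$, and Lemma~\ref{pohozaev-transformed} delivers the non-degeneracy bound $[u_k'(0)]^2+[v_k'(0)]^2\ge C>0$ uniformly in $k$, which prevents $(u_k,v_k)$ from degenerating at the boundary $t=0$. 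The first analytic ingredient is an integral estimate derived from~(\ref{eq:stability-k}): testing $Q_k\ge 0$ with $\varphi=\psi\binom{u_k}{v_k}$ for $\psi\in C^1_c((0,\infty))$, integrating by parts with the help of~(\ref{eq:henon-transformed-1}) and applying the homogeneity identities~(\ref{F-consequence-1})--(\ref{F-consequence-3}) exactly as in the proof of Theorem~\ref{sec:limit-system-1}, one obtains
\[
p(p-2)c_F\int_0^\infty e^{-Nt}(|u_k|^p+|v_k|^p)\psi^2\,dt \le \int_0^\infty e^{-\gamma_k t}(u_k^2+v_k^2)(\psi')^2\,dt + O(\beta_k^2),
\]
and a Young-type manipulation as in~(\ref{eq:phi-est}) turns this into a uniform a priori bound
\[
\int_0^\infty e^{-Nt}(|u_k|^p+|v_k|^p)\phi^{\frac{2p}{p-2}}\,dt \le C\int_0^\infty e^{-\gamma_k t}|\phi'|^{\frac{2p}{p-2}}\,dt + o(1)
\]
valid for every $\phi\in C^1_c((0,\infty))$.

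The next stage is the blow-up. Feeding the previous estimate into a variant of the Polacik--Quittner--Souplet doubling lemma (the variant being postponed to the appendix), I extract concentration points $t_k\in[0,\infty)$ and rescaling parameters $a_k,b_k\to 0$ for the profiles
\[
w_k(s):=a_k u_k(t_k+b_k s),\qquad z_k(s):=a_k v_k(t_k+b_k s),
\]
with scales chosen so that $a_k^{p-2}=b_k^{2}e^{(\gamma_k-N)t_k}$. Under this choice, the $(p-1)$-homogeneity of $\partial_u F,\partial_v F$ rescales the nonlinearity to $e^{(\gamma_k-N)b_k s}\partial_u F(w_k,z_k)\to \partial_u F(w,z)$ on compacta (since $b_k\to 0$), while $\gamma_k b_k\to 0$ and $\beta_k^2 b_k^2\to 0$ wipe out the drift and the $\mu_1,\mu_2$-terms. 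The system~(\ref{eq:henon-transformed-1}) therefore converges to (\ref{eq:limit-system-rescaling-without-bc}) on the expanding intervals $I_k=(-t_k/b_k,\infty)$; the doubling lemma fixes a normalization ensuring $|w_k(0)|+|z_k(0)|$ is bounded below, and standard ODE compactness yields a $C^2_{\rm loc}$-limit $(w,z)\in L^\infty(I,\R^2)\cap C^2(I,\R^2)$ solving (\ref{eq:limit-system-rescaling-without-bc}) on $I=\R$ (when $t_k/b_k\to\infty$) or on $I=[0,\infty)$ with $w(0)=z(0)=0$ (when $t_k/b_k$ stays bounded). In the half-line case, the Pohozaev lower bound of Lemma~\ref{pohozaev-transformed} transfers to $(w,z)$ under the rescaling and forces $[w'(0)]^2+[z'(0)]^2>0$, preserving nontriviality.

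Stability passes to the limit by plugging rescaled cutoffs of the form $\phi((t-t_k)/b_k)$ into $Q_k\ge 0$ and sending $k\to\infty$: the weights $e^{-\gamma_k t}$, $e^{-Nt}$ and the $\beta_k^2$-terms produce, after change of variable, the correct limiting factors and the lower-order pieces drop out, leaving $q_{w,z}(\phi)\ge 0$ for every $\phi\in C^1_c(I,\R^2)$. Theorem~\ref{sec:limit-system-1} then forces $w\equiv z\equiv 0$, contradicting the nontriviality of the limit and completing the argument. The main obstacle is the doubling-lemma construction of the triple $(t_k,a_k,b_k)$: one must arrange the three $k$-dependent weights to flatten simultaneously on compact $s$-sets, align the rescaled nonlinearity via homogeneity, and preserve both nontriviality and stability of the limit. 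The case split between $I=\R$ and $I=[0,\infty)$ is unavoidable, since the boundary condition $u_k(0)=v_k(0)=0$ is active only when $t_k/b_k$ stays bounded, and it is precisely the Pohozaev-type estimate of Lemma~\ref{pohozaev-transformed} that prevents the mass of the rescaled profile from escaping to the boundary in that regime.
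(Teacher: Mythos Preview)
Your plan has a genuine gap: the blow-up/rescaling step presupposes that $b_k\to 0$, which via your scaling relation $a_k^{p-2}=b_k^2 e^{(\gamma_k-N)t_k}$ forces $\max\{|u_k(t_k)|,|v_k(t_k)|\}\to\infty$. Nothing in your steps 1--3 produces this. The doubling lemma does not manufacture blow-up; it regularizes a blow-up you have already assumed. If the sequence $(u_k,v_k)$ happens to be bounded in $L^\infty_{loc}([0,\infty))$, your rescaling collapses: with $b_k$ bounded away from zero the weight $e^{(\gamma_k-N)b_k s}$ does not flatten, and the limit system you land on is not~(\ref{eq:limit-system-rescaling-without-bc}) but the \emph{weighted} system
\[
-u_0''=e^{-Nt}\partial_u F(u_0,v_0),\qquad -v_0''=e^{-Nt}\partial_v F(u_0,v_0),\qquad u_0(0)=v_0(0)=0,
\]
to which Theorem~\ref{sec:limit-system-1} does not apply.

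The paper's proof therefore runs in two distinct stages. First, the blow-up argument is used exactly as a contradiction device to establish local $L^\infty$-bounds: \emph{if} the sequence were unbounded on some compact set, the doubling lemma and Theorem~\ref{sec:limit-system-1} would yield a nontrivial stable solution of~(\ref{eq:limit-system-rescaling-without-bc}), which is impossible. Second, with local bounds in hand, one passes to the limit $(u_0,v_0)$ \emph{without rescaling}; this limit solves the weighted system above and inherits the stability inequality~(\ref{stability-limit}). The crucial step you are missing is to show $(u_0,v_0)\equiv 0$: this is done by using Lemma~\ref{pohozaev-transformed} and Fatou to get finite Dirichlet energy, then Lemma~\ref{approx-lemma} to test~(\ref{stability-limit}) with $(u_0,v_0)$ itself, and finally combining the resulting inequality with integration by parts in the weighted system and~(\ref{F-consequence-3}). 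Only then does Lemma~\ref{pohozaev-transformed} close the argument, since $[u_k'(0)]^2+[v_k'(0)]^2\to 0$ contradicts the lower bound~(\ref{pohozaev-transformed-lower-bound}) unless $(u_k,v_k)\equiv 0$ eventually. Your integral estimate and your use of Lemma~\ref{pohozaev-transformed} are correct ingredients, but they belong to this second stage, not to the blow-up.
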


\begin{proof}
We may rewrite the system (\ref{eq:henon-transformed-1}) as
$$
\left \{
 \begin{aligned}
-u_k'' + \gamma_k u_k'   + {\beta_k}^2 e^{\gamma_k t} W_1(t) u_k  &= e^{(\gamma_k-N) t}\partial_u F(u_k,v_k)\\
-v_k'' + \gamma_k v_k'  + {\beta_k}^2 e^{\gamma_k t} W_2(t) v_k  &= e^{(\gamma_k-N) t}\partial_v F(u_k,v_k)\\
u_k (0)&=v_k(0)=0,
    \end{aligned}
 \right.
$$
with $W_i(t):= e^{-\beta_k N t}\mu_i$ for $i=1,2$ and $t \ge 0$. By a blow up argument based on the Liouville Theorem~\ref{sec:limit-system-1} and a variant of the doubling lemma of Polacik, Quittner and Souplet \cite{PQS}, we first wish to show that the sequence $(u_k,v_k)_k$ is bounded in $L^\infty_{loc}([0,\infty),\R^2)$. For this we consider the functions
$$
M_k:= \max \{ |u_k|^{\frac{p-2}{2}}, |v_k|^{\frac{p-2}{2}}\} : [0,\infty) \to \R, \qquad k \in \N.
$$
Arguing be contradiction, let us assume that there exists a bounded sequence $(s_k)$ in $[0,\infty)$ such that ${N_k}:= M_k(s_k) \to \infty$ as $k \to \infty$. Applying Lemma~\ref{doubling-simplified-further} from the Appendix to $X:=[0,\infty)$,
we find another bounded sequence $(t_k)_k$ in $[0,\infty)$
such that, for $k \in \N$,
$$
M_k(t_k) \ge {N_k} \qquad \text{and}\qquad M_k(t) \le 2 M_k(t_k)\quad \text{for
  $t \in B_{\sigma_k {N_k}}(t_k) \cap [0,\infty)$,}
$$
where $\sigma_k:= \frac{1}{M_k(t_k)}$ for $k \in \N$. Passing to a subsequence, we may assume that
$$
t_k \to t_0 \in [0,\infty) \qquad \text{as $k \to \infty$.}
$$
We now put $c_k:= \frac{t_k}{\sigma_k}$ for $k \in \N$ and distinguish two cases. \\[0.2cm]
{\bf Case 1:} $c_k \to \infty$ as $k \to \infty$.\\
In this case,  we consider the functions $\bar u_{k}, \bar
v_{k}$ on \mbox{$I_k:= [-c_k,\infty)$} given by
$$
\bar u_k(t):= {\sigma_k}^{\frac{2}{p-2}}  u(t_k+ {\sigma_k} t),\quad \bar v_k(t):= {\sigma_k}^{\frac{2}{p-2}}  v(t_k + {\sigma_k} t)\qquad \text{for $k \in \N$.}
$$
These functions solve
$$
\left \{
 \begin{aligned}
-\bar u_k'' +  {\sigma_k} \gamma_k
\bar u_k'   + {\sigma_k}^{2} {\beta_k}^2 W_1^k(t) \bar u_k &=  e^{[\gamma_k-N](t_k+ \sigma_k t) }\partial_u F(\bar u_k,\bar v_k)
\\
-\bar v_k'' + {\sigma_k} \gamma_k \bar v_k'  + {\sigma_k}^2{\beta_k}^2 W_2^k(t) \bar v_k  &= e^{[\gamma_k-N](t_k+ \sigma_k t) }\partial_v F(\bar u_k,\bar v_k)
    \end{aligned}
 \right.
$$
in $I_k$ with
$$
W_i^k(t):= e^{\gamma_k (t_k + \sigma_k t)} W_i (t_k + \sigma_k t)\qquad \text{for $i=1,2$, $k \in \N$ and $t \in I_k$.}
$$
Moreover, we have $\max \{|\bar u_k(0)|,|\bar v_k(0)|\}=1$ and
$$
\max \{|\bar u_k(t)|,|\bar v_k(t)|\} \le 2^{\frac{2}{p-2}} \qquad \text{in $(-{N_k} ,{N_k} ) \cap I_k$}
$$
Since the functions $W_i^k$ remain locally bounded as $k \to \infty$, we may apply elementary ODE regularity estimates in order to pass to a subsequence with
$$
\bar u_k \to \bar u ,\quad \bar v_k \to \bar v \qquad \text{locally
uniformly in $\R$}
$$
where $\max \{\bar u(0), \bar v(0)\}=1$ and $(\bar u, \bar v)$
satisfies the limit system
\begin{equation}
  \label{eq:limit-system-rescaling-entire}
 \left \{
 \begin{aligned}
-\bar u''   &= e^{-Nt_0}\partial_u F(\bar u,\bar v), &&\qquad \text{in $\R$}\\
-\bar v''   &= e^{-Nt_0}\partial_v F(\bar u,\bar v), &&\qquad \text{in $\R$}\\
   \end{aligned}
\right.
\end{equation}
In particular, $(\bar u,\bar v)$ is nontrivial. By Theorem~\ref{sec:limit-system-1} -- applied with $e^{-N t_0} F$ in place of $F$-- it then follows that $(\bar u,\bar v)$ is not stable in $\R$, so there exists $\varphi= (\varphi_1,\varphi_2) \in C^1_c(\R,\R^2)$ such that
\begin{equation}
  \label{eq:not-stable-limit-1}
\int_{\R} \Bigl(|\varphi'(t)|^2
-e^{-N t_0} \bigl \langle D^2 F(\bar u,\bar v) \varphi, \varphi  \bigr \rangle(t)
\Bigr)\,dt <0.
\end{equation}
Since
$$
\lim_{k \to \infty} (\bar u_k(t), \bar v_k(t))= (\bar u(t), \bar v(t))\qquad \text{and} \qquad
\lim_{k \to \infty} (t_k+ \sigma_k t ) = t_0
$$
uniformly in $t$ on the support of $\varphi$, we also have that
\begin{align*}
&\sigma_k^2 U_k(t_k + \sigma_k t)\\
&= e^{-N(t_k + \sigma_k t)} D^2 F(\bar u_k,\bar v_k)(t) - \sigma_k^2 \beta_k^{2}  \left (\begin{array}{cc}
W_1 (t_k + \sigma_k t) & 0 \\
0 & W_2 (t_k + \sigma_k t)
\end{array}
 \right)\\
&\longrightarrow\; e^{-N t_0} D^2 F(\bar u,\bar v)(t) \qquad \text{uniformly in $t$ on the support of $\varphi$.}
\end{align*}
Here we used the fact that $\sigma_k ,\beta_k \to 0$ as $k \to \infty$. Recalling also that $\gamma_k \to 0$ as $k \to \infty$, it then follows from (\ref{eq:not-stable-limit-1}) that
\begin{align*}
q_k:= \int_{I_k}&
e^{-\gamma_k (t_k + \sigma_k t)} |\varphi'(t)|^2\\
+& \sigma_k^2 \Bigl(\beta_k^2 \lambda e^{-\gamma_k (t_k + \sigma_k t)}|\varphi(t)|^2 -  \bigl \langle U_k(t_k + \sigma_k t) \varphi (t),\varphi (t) \bigr \rangle
\Bigr)\,dt <0
\end{align*}
for $k$ sufficiently large. Fixing $k$ with this property and large enough to guarantee that $I_k$ contains the support of $\varphi$, we may then write
$$
\varphi(t)= \psi(t_k + \sigma_k t) \qquad \text{with}\quad \psi= (\psi_1,\psi_2) \in C^1_c((0,\infty),\R^2).
$$
A change of variable then shows that
$$
q_k = \sigma_k \!\int_{I_k}\!\bigl(
e^{-\gamma_k \tau} |\psi'(\tau)|^2 + \lambda \beta_k^2 e^{-\gamma_k \tau}|\psi(\tau)|^2 -
\langle \psi(\tau),
U_k(\tau)\psi(\tau)\rangle
\bigr)d\tau = \sigma_k Q_k(\psi).
$$
Sincr $\sigma_k>0$, we conclude that $Q_k(\psi)= \frac{q_k}{\sigma_k}>0$, contradicting the assumption (\ref{eq:stability-k}).\\
{\bf Case 2:} For a subsequence, $c_k:= \frac{t_k}{\sigma_k} \to c \ge 0$ as $k \to \infty$.\\[0.2cm]
In this case  we have $t_0 = 0$, and we consider the functions
$$
\bar u_{k}, \bar
v_{k}: [0,\infty) \to \R,\qquad
\bar u_k(t):= {\sigma_k}  u({\sigma_k} t),\; \bar v_k(t):= {\sigma_k}  v({\sigma_k} t)
$$
for $k \in \N$. These functions solve
$$
\left \{
 \begin{aligned}
-\bar u_k'' +  {\sigma_k} \gamma_k
\bar u_k'   + {\sigma_k}^{2} {\beta_k}^2
W_1^k(t) \bar u_k &=  e^{(\gamma_k-N) \sigma_k t }\partial_u F(u_k, v_k)
\\
-\bar v_k'' + {\sigma_k} \gamma_k \bar v_k'  + {\sigma_k}^2 {\beta_k}^2 W_2^k(t)
  \bar v_k  &= e^{(\gamma_k-N) \sigma_k t }\partial_v F(u_k,v_k)
    \end{aligned}
 \right.
$$
with
$$
W_i^k(t):= e^{\gamma_k \sigma_k t} W_i (\sigma_k t)\qquad \text{for $i=1,2$, $k \in \N$ and $t \in I_k$.}
$$
in $[0,\infty)$. Moreover, $\max \{|\bar u_k(c_k)|,|\bar v_k(c_k)|\}=1$ and
$$
\max \{|\bar u_k(t)|,|\bar v_k(t)|\} \le 2^{\frac{2}{p-2}} \qquad \text{in $[0,c_k + {N_k} )$}
$$
Here we suppose that $k$ is sufficiently large so that ${N_k} \ge c_k$. Applying elementary ODE regularity theory, we may pass to a subsequence such that
$$
\bar u_k \to \bar u ,\quad \bar v_k \to \bar v \qquad \text{locally
uniformly in $[0,\infty)$}
$$
where $\max \{\bar u(c), \bar v(c)\}=1$ and $(\bar u, \bar v)$
satisfies the limit problem
\begin{equation}
  \label{eq:limit-system-rescaling-half}
 \left \{
 \begin{aligned}
-\bar u''   &= \partial_u F(u,v), &&\qquad \text{in $[0,\infty)$}\\
-\bar v''   &= \partial_v F(u,v), &&\qquad \text{in $[0,\infty)$}\\
\bar u(0)&= \bar v(0)=0.
   \end{aligned}
\right.
\end{equation}
A posteriori, it then follows that $c>0$. Moreover, by Theorem~\ref{sec:limit-system-1}, it follows that $(\bar u,\bar v)$ is not stable in $\R$, so there exists $\varphi= (\varphi_1,\varphi_2) \in C^1_c((0,\infty),\R^2)$ such that
\begin{equation}
  \label{eq:not-stable-limit-1-caseii}
 \int_{\R} \Bigl(|\varphi'(t)|^2
-\bigl \langle D^2 F(\bar u,\bar v) \varphi,\varphi \bigr \rangle(t)
\Bigr)\,dt <0.
\end{equation}
Since
$$
\lim_{k \to \infty} (\bar u_k(t), \bar v_k(t))= (\bar u(t), \bar v(t))\qquad \text{and} \qquad
\lim_{k \to \infty} \sigma_k t  = 0
$$
uniformly in $t$ on the support of $\varphi$, we also have that
\begin{align*}
&\sigma_k^2 U_k(\sigma_k t)\\
&=  e^{- \sigma_k N t} D^2 F(\bar u_k,\bar v_k)(t) - \sigma_k^2 \beta_k^{2} \left (\begin{array}{cc}
W_1(\sigma_k t) & 0 \\
0 & W_2(\sigma_k t)
\end{array}
 \right)\\
&\longrightarrow\; D^2 F(\bar u,\bar v)(t) \qquad \text{uniformly in $t$ on the support of $\varphi$.}
\end{align*}
It then follows from (\ref{eq:not-stable-limit-1-caseii}) that
\begin{align*}
q_k:= \int_{0}^{\infty}&
e^{-\gamma_k \sigma_k t} |\varphi'(t)|^2\\
+& \sigma_k^2 \Bigl(\lambda  \beta_k^2 e^{-\gamma_k  \sigma_k t}|\varphi(t)|^2 -  \bigl \langle U_k(\sigma_k t) \varphi(t),\varphi(t)\bigr \rangle
\Bigr)\,dt <0
\end{align*}
for $k$ sufficiently large. Writing
$$
\varphi(t)= \psi(\sigma_k t) \qquad \text{with}\quad \psi= (\psi_1,\psi_2) \in C^1_c((0,\infty),\R^2),
$$
we then see, by a change of variable,
$$
q_k = \sigma_k \!\int_{0}^\infty\!\! \bigl(
e^{-\gamma_k \tau} |\psi'(\tau)|^2 + \lambda \beta_k^2 e^{-\gamma_k \tau}|\psi(\tau)|^2 -
\langle \psi(\tau),
U_k(\tau) \psi(\tau) \rangle
\bigr)d\tau = \sigma_k Q_k(\psi),
$$
contradicting the assumption (\ref{eq:stability-k}).\\
Since in both cases we reached a contradiction, we conclude that
$$
\text{$(u_k,v_k)$ remains bounded on compact subsets of $[0,\infty)$ as $k
\to \infty$.}
$$
Hence, by elementary ODE regularity estimates, we may pass to a subsequence such that
$$
u_k \to u_{0},  \quad v_k \to v_{0} \qquad \text{in
$C^1_{loc}([0,\infty)$ as ${\beta_k} \to 0$,}
$$
where $(u_0, v_0)$ is a nonnegative solution of the limit system
\begin{equation}
  \label{eq:henon-limit-system}
\left \{
 \begin{aligned}
- u_0''  &= e^{-Nt}\partial_{u}F(u_0,v_0)&&\qquad \text{in $(0,\infty)$,}\\
- v_0'' &= e^{-Nt}\partial_{v}F(u_0,v_0) &&\qquad \text{in $(0,\infty)$,}\\
u_0(0)&=v_0(0)=0.
    \end{aligned}
 \right.
\end{equation}
Moreover, passing to the limit in (\ref{eq:stability-k}), we find that
\begin{equation}
\int_{0}^\infty \Bigl(|\varphi'|^2  - e^{-Nt} \bigl \langle D^2 F(u_0,v_0) \varphi, \varphi \bigr \rangle \Bigr)\,dt \ge 0
 \label{stability-limit}
\end{equation}
for all $\varphi \in C_c^1((0,\infty),\R^2)$. Moreover, by Lemma~\ref{pohozaev-transformed} and Fatou's Lemma we have that
\begin{align}
&\int_0^\infty \bigl([u_0']^2 + [v_0']^2\bigr)\,dt = \liminf_{k \to \infty} \int_0^\infty (e^{-\gamma_k t} u_k)'^2 + (e^{-\gamma_k t} v_k)'^2 \,dt \nonumber\\
&\le \frac{2}{N} \lim_{k \to \infty} ([u_k'(0)]^2 + [v_k'(0)]^2) = \frac{2}{N} \bigl([u_0'(0)]^2 + [v_0'(0)]^2\bigr)< \infty.\label{limit-finite-energy}
\end{align}
Applying Lemma~\ref{approx-lemma}, we thus find a sequence $\varphi_n=(\varphi_n^1,\varphi_n^2)  \in C_c^1((0,\infty),\R^2)$ such that
$$
\lim_{n \to \infty} \int_0^\infty [(u_0-\varphi_n^1)'\,]^2\,dt = \lim_{n \to \infty} \int_0^\infty [(v_0-\varphi_n^2)\,]'^2\,dt= 0
$$
and
$$
\lim_{n \to \infty}e^{-\delta t}(u(t)-\varphi_n^1(t))=\lim_{n \to \infty}e^{-\delta t}(v(t)-\varphi_n^2(t))= 0
$$
uniformly in $t \ge 0$ for every $\delta>0$. Evaluating (\ref{stability-limit}) for $\varphi_n$ and passing to the limit, we thus see that
\begin{equation}
  \label{stability-limit-1}
\int_{0}^\infty \Bigl([u_0']^2 +[u_0']^2  - e^{-Nt} \Bigl \langle D^2 F(u_0,v_0) {u_0 \choose v_0} , {u_0 \choose v_0} \Bigr \rangle \Bigr)\,dt \ge 0.
\end{equation}
On the other hand, by Lemma~\ref{approx-lemma} we also have
$$
u_0^2(t)+v_0^2(t) \le C t \qquad \text{with}\; C:=\int_0^\infty \bigl([u_0']^2 + [v_0']^2\bigr)\,dt.
$$
Moreover, it follows from \eqref{limit-finite-energy} that there exist $t_n \ge 0$, $n \in \N$ with $t_n \to \infty$ and
$$
\sqrt{t_n}\bigl(|u_0'(t_n)|+|v_0'(t_n)|\bigr) \to 0 \qquad \text{as $n \to \infty$.}
$$
Consequently,
\begin{align}
&\int_0^\infty \bigl([u_0']^2 + [v_0']^2\bigr)\,dt =\lim_{n \to \infty}
\int_0^{t_n} \bigl([u_0']^2 + [v_0']^2\bigr)\,dt \nonumber\\
& =\lim_{n \to \infty}\Bigl(\frac{u_0(t_n)u_0'(t_n)}{2}-\int_0^{t_n} \bigl(u_0'' u_0 + v_0'' v_0 \bigr)\,dt\Bigr) \nonumber\\
& =\lim_{n \to \infty} \int_0^{t_n}e^{-Nt} \bigl(\partial_u F(u_0,v_0)u_0  + \partial_v F(u_0,v_0)v_0 \bigr)\,dt \nonumber \\
&=\int_0^{\infty}e^{-Nt} \bigl(\partial_u F(u_0,v_0)u_0  + \partial_v F(u_0,v_0)v_0 \bigr)\,dt. \nonumber
\end{align}
Combining this with (\ref{stability-limit-1}) and (\ref{F-consequence-3}), we deduce that
\begin{align*}
0 &\le \int_{0}^\infty e^{-Nt} \Bigl(\partial_u F(u_0,v_0)u_0  + \partial_v F(u_0,v_0)v_0-  \Bigl \langle D^2 F(u_0,v_0) {u_0 \choose v_0} , {u_0 \choose v_0} \Bigr \rangle \Bigr)\,dt\\
&\le - p(p-2)c_F \int_0^\infty e^{-Nt}\bigl(|u_0|^p+|v_0|^p\bigr)\,dt,
\end{align*}
which implies that $u_0 \equiv v_0 \equiv 0$. Consequently,
$$
\lim_{k \to \infty} \bigl([u_k'(0)]^2 + [v_k'(0)]^2\bigr) = \bigl([u_0'(0)]^2 + [v_0'(0)]^2\bigr) = 0
$$
yielding $u_k \equiv v_k \equiv 0$ for $k$ sufficiently large by Lemma~\ref{pohozaev-transformed} and the assumption $\lim \limits_{k \to \infty}\gamma_k \to 0$. The proof is finished.
\end{proof}

\section{Appendix}
\label{sec:appendix}

\subsection{Part A: A note on a linear ODE system on the half line}

The following Lemma is not surprising, as it relates variational instability of linear ODE system to the existence of negative eigenvalues and corresponding eigenfunctions.   
Since the proof is rather technical and not straightforward, we include it for the convenience of the reader. 

\begin{lemma}
\label{sec:morse-index-system-help-lemma}
Let $\delta> \gamma>0$, $\lambda \ge 0$, and let $U:[0,\infty) \to \R^{2 \times 2}$ be a bounded continuous function taking symmetric real $2 \times 2$-matrices as values. Suppose that there exists a function $\varphi \in C^1_c((0,\infty),\R^2)$ such that
$$
Q(\varphi):=\int_{0}^\infty \Bigl(e^{-\gamma t}|\varphi'(t)|^2 + \lambda e^{-\gamma t}|\phi(t)|^2  - e^{-\delta t} \bigl \langle U(t)\varphi(t),\varphi(t) \bigr \rangle\Bigr)\,dt <0,
$$
Then there exists $\mu<0$ and a function $h \in C^2([0,\infty),\R^2)$ such that
\begin{equation}
  \label{eq:radial-variable-linear-problem-1-help-lemma}
- \partial_t \bigl(e^{-\gamma t}\partial_t h\bigr) + \lambda e^{-\gamma t}h - e^{-\delta t} U(t) h
= \mu e^{-\delta t} h \quad \text{in
$[0,\infty)$}
\end{equation}
and such that
$$
h(0)=0, \qquad h(\infty)=\lim \limits_{t \to \infty}h(t)\; \text{exists.}
$$
\end{lemma}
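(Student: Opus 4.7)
I would obtain $(\mu,h)$ as a minimizer of the Rayleigh quotient $R(h) := Q(h)/\|h\|_{L^2_\delta}^2$, where $\|h\|_{L^2_\delta}^2 := \int_0^\infty e^{-\delta t}|h|^2\,dt$, on a suitable weighted Sobolev space. Specifically, I would let $X$ denote the completion of $C^1_c((0,\infty),\R^2)$ under the norm
\[
\|h\|_X^2 := \int_0^\infty e^{-\gamma t}|h'|^2\,dt + \lambda \int_0^\infty e^{-\gamma t}|h|^2\,dt + \int_0^\infty e^{-\delta t}|h|^2\,dt.
\]
From the pointwise bound $|h(t)|^2 \le \gamma^{-1} e^{\gamma t}\int_0^\infty e^{-\gamma s}|h'|^2\,ds$ (Cauchy--Schwarz applied to $h(t)=\int_0^t h'\,ds$) one sees that any Cauchy sequence in $X$ converges uniformly on compact subsets of $[0,\infty)$, so every $h \in X$ is continuous with $h(0)=0$, and $Q$ extends continuously to $X$ by the boundedness of $U$. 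I would then set $\mu := \inf\{Q(h) : h \in X,\ \|h\|_{L^2_\delta}=1\}$; the hypothesis on $\varphi$ gives $\mu \le R(\varphi) < 0$, while the bound $Q(h) \ge -\|U\|_\infty\|h\|_{L^2_\delta}^2$ gives $\mu \ge -\|U\|_\infty > -\infty$.

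The key technical step is the compact embedding $X \hookrightarrow L^2(e^{-\delta t}\,dt,\R^2)$. The pointwise bound above combined with $\delta > \gamma$ produces the uniform tail estimate
\[
\int_R^\infty e^{-\delta t}|h|^2\,dt \le \frac{e^{-(\delta-\gamma)R}}{\gamma(\delta-\gamma)}\int_0^\infty e^{-\gamma s}|h'|^2\,ds,
\]
which, together with the standard local compactness $H^1(0,R)\hookrightarrow L^2(0,R)$ and a diagonal extraction, yields compactness. The direct method then produces a minimizer $h \in X$ with $\|h\|_{L^2_\delta}=1$ and $Q(h)=\mu<0$: any minimizing sequence is bounded in $X$, weak limits exist by reflexivity, strong convergence in $L^2_\delta$ passes through $U$ and keeps $\|h\|_{L^2_\delta}=1$, while weak lower semicontinuity handles the convex terms of $Q$. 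The Euler--Lagrange equation, tested against $\psi \in C^1_c((0,\infty),\R^2)$, is the weak form of~\eqref{eq:radial-variable-linear-problem-1-help-lemma}, and classical ODE regularity upgrades $h$ to $C^2([0,\infty),\R^2)$ solving the equation pointwise.

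The remaining delicate step, and the main obstacle, is to verify that $h(\infty):=\lim_{t\to\infty}h(t)$ exists. Multiplying the equation by $e^{\gamma t}$ rewrites it as
\[
h'' - \gamma h' - \lambda h = -e^{-(\delta-\gamma)t}(U+\mu I)h,
\]
an exponentially small perturbation of the autonomous system $y'' - \gamma y' - \lambda y = 0$, whose characteristic roots $r_\pm := \tfrac12(\gamma \pm \sqrt{\gamma^2+4\lambda})$ satisfy $r_- \le 0 < \gamma \le r_+$ and $2r_+ - \gamma = \sqrt{\gamma^2+4\lambda} > 0$. I would apply a Levinson-type asymptotic integration argument to the associated $4$-dimensional phase-space system to decompose solutions of the perturbed equation into an $r_+$-branch and an $r_-$-branch. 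Since
\[
\int_0^\infty e^{-\gamma t}\bigl|(e^{r_+ t})'\bigr|^2\,dt = r_+^2\int_0^\infty e^{\sqrt{\gamma^2+4\lambda}\,t}\,dt = +\infty,
\]
the membership $h \in X$ forces the $r_+$-coefficient in this decomposition to vanish, so $h$ matches the $r_-$-branch asymptotically. The $r_-$-branch of the limit autonomous system has a finite limit at $+\infty$ (namely $0$ if $\lambda>0$ and a constant if $\lambda=0$), whence $h(\infty)$ exists. The hard part is precisely this last step: carefully invoking Levinson's theorem in the presence of $U$ and transferring the integrability carried by $X$ to rigorously rule out the growing mode; the other steps are routine weighted-Sobolev spectral theory.
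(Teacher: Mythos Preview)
Your variational setup --- the weighted space, the pointwise bound $|h(t)|^2 \le C e^{\gamma t}\|h\|_X^2$, the tail estimate giving compactness of the embedding into $L^2_\delta$, and the direct-method extraction of a minimizer satisfying the Euler--Lagrange equation --- is essentially identical to the paper's (the paper works with the equivalent norm $\|h\|_*^2 = \int e^{-\gamma t}|h'|^2\,dt$, which already controls the other pieces of your $\|\cdot\|_X$). The genuine divergence is in the proof that $h(\infty)$ exists. The paper does this by hand, splitting into two cases: for $\lambda>0$ it derives a differential inequality for $v=|h|^2$ of the form $v'' - (\gamma+\gamma_0)v' \ge \tfrac{\gamma_0}{2}(|h'|^2+|h|^2)$ on a half-line, analyzes the monotone quantity $v' - (\gamma+\gamma_0)v$, and rules out growth using the pointwise bound, ultimately forcing $h(t)\to 0$; for $\lambda=0$ it bootstraps the equation $|\partial_t(e^{-\gamma t}h')| \le C e^{-\delta t}|h|$ against the pointwise bound, iterating finitely many times until $h'$ is shown to decay exponentially. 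Your Levinson-type argument is a legitimate and more structural alternative: after diagonalizing the constant part (which is diagonalizable since $r_+\ne r_-$ because $\gamma>0$), the perturbation is $L^1$ and the dichotomy condition is trivially met, so Levinson produces a fundamental system with two solutions asymptotic to $e^{r_+t}$ and two asymptotic to $e^{r_-t}$; the $X$-integrability then kills the $r_+$-components exactly as you say. What you gain is a uniform argument with no case split and a clear conceptual picture of why the limit exists (and is in fact $0$ when $\lambda>0$); what the paper gains is that it is entirely self-contained, needing no external asymptotic-integration theorem. Your identification of the only nontrivial point --- passing from $h\in X$ to vanishing $r_+$-coefficients in the Levinson basis --- is accurate, and goes through because any nonzero $r_+$-component forces $|h'(t)| \ge c\,e^{r_+t}$ for large $t$ (from the first-order phase-space asymptotics), whence $\int e^{-\gamma t}|h'|^2 = \infty$.
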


\begin{proof}
In the following, $C>0$ always denotes a positive constant depending only on $\delta,\gamma$ and $U$. We introduce the weighted Sobolev space $H_*$ given as the completion of $C^1_c((0,\infty),\R^2)$ with respect to the norm $\|\cdot\|_*$ defined by
$$
\|h\|_*^2 = \int_{0}^\infty e^{-\gamma t}|h'(t)|^2\, dt
$$
Then $H_*$ is a Hilbert space. Moreover, for $h \in C^1_c((0,\infty),\R^2)$ we have, integrating by parts,
$$
\int_{0}^\infty e^{-\gamma t}|h(t)|^2\,dt = \frac{2}{\gamma}
\int_{0}^\infty e^{-\gamma t} \langle h(t), h'(t) \rangle \,dt
\le \frac{2}{\gamma}\Bigl(\int_{0}^\infty e^{-\gamma t}|h(t)|^2\,dt\Bigr)^{\frac{1}{2}}\|h\|_*
$$
and thus
$$
\|h\|_{L^2_\gamma}:= \Bigl(\int_{0}^\infty e^{-\gamma t}|h(t)|^2\,dt\Bigr)^{\frac{1}{2}} \le \frac{2}{\gamma}\|h\|_*.
$$
This estimate extends to functions on $H_*$ and shows that the quadratic form $Q$ is well defined on $H_*$. Moreover, for $h \in C^1_c((0,\infty),\R^2)$ we have the pointwise estimate
\begin{align*}
|h(t)|^2= 2 \int_0^t \langle h(s), h'(s) \rangle \,ds &\le 2 e^{\gamma t}
\int_0^t e^{-\gamma s} \langle h(s),  h'(s) \rangle \,ds\\
&\le 2 e^{\gamma t} \|h\|_{L^2_\gamma} \|h\|_* \le \frac{4}{\gamma} e^{\gamma t} \|h\|_*^2 \quad \text{for $t \ge 0$.}
\end{align*}
This pointwise estimate also extends to functions in $H_*$, and it easily shows that every $h \in H_*$ is continuous on $[0,\infty)$ with $h(0)=0$
and
\begin{equation}
  \label{eq:pointwise-estimate-h}
|h(t)| \le \frac{2}{\sqrt{\gamma}} \|h\|_*   e^{\frac{\gamma}{2}t}  \qquad \text{for $t \ge 0$.}
\end{equation}
Now by assumption we have
$$
\mu:= \inf_{M}Q \;<\;0 \qquad \text{for}\quad  M:= \Bigl \{\varphi \in H_*\::\: \int_0^\infty e^{-\delta t}|\varphi(t)|^2\,dt=1 \Bigr \}.
$$
Let $(h_n)_n \subset M$ be a sequence with $Q(h_n) \to \mu$ as $n \to \infty$. Since
$$
\int_0^\infty e^{-\delta t} \langle U(t)h_n(t),h_n(t) \rangle dt \le \|U\|_\infty  \int_0^\infty e^{-\delta t}|h_n(t)|^2\,dt =\|U\|_\infty \qquad \text{for $n \in \N$,}
$$
it follows that $\mu> -\infty$ and that $h_n$ is bounded in $H_*$. Passing to a subsequence,
we then have
\begin{equation}
  \label{eq:L2-loc-est}
h_n \weak h \quad \text{in $H_*$}\qquad \text{and}\qquad h_n \to h \quad \text{in $L^2_{loc}([0,\infty))$.}
\end{equation}
As a consequence of (\ref{eq:pointwise-estimate-h}), we also find that
\begin{equation*}
\int_{t}^\infty \!\!e^{-\delta s}h_n^2(s) ds \le \frac{4}{\gamma} \|h_n \|_*^2 \!\int_{t}^\infty \!\!e^{(\gamma-\delta)s}ds = \frac{4 }{\gamma (\delta -\gamma)} \|h_n \|_*^2 e^{(\gamma-\delta)t} \quad \text{for $t \ge 0$,}
\end{equation*}
where the RHS tends to zero as $t \to \infty$ uniformly in $n \in \N$. Combining this with (\ref{eq:L2-loc-est}), we conclude that
$$
\int_{0}^\infty e^{-\delta t}|h|^2 dt= \lim_{n \to \infty}\int_{0}^\infty e^{-\delta t}|h_n|^2 dt=1,
$$
which implies that $h \in M$. By the weak lower semicontinuity of $Q$, it then follows that $h$ is a minimizer of $Q$ on $M$. By a standard argument in the calculus of variations, this implies that $h$ is a classical solution of~(\ref{eq:radial-variable-linear-problem-1-help-lemma}) in the open interval $(0,\infty)$. Since, as remarked above, we also have $h \in
C([0,\infty),\R^2)$ and $h(0)=0$, we may use ~(\ref{eq:radial-variable-linear-problem-1-help-lemma}) to see that $h \in C^2([0,\infty),\R^2)$.
It remains to show that
\begin{equation}
  \label{eq:claim-limit-exist}
\text{the limit $\;h(\infty)= \lim_{t \to \infty} h(t)\;$ exists.}
\end{equation}
For this we need to distinguish two cases.\\
{\underline{Case 1:}} $\lambda>0$.\\
In this case we rewrite (\ref{eq:radial-variable-linear-problem-1-help-lemma}) as
\begin{equation}
  \label{eq:radial-variable-linear-problem-1-help-lemma-rewrite}
   h'' = \gamma h' +\lambda h - e^{(\gamma-\delta)t}V(t)h \qquad \text{with}\quad V(t)= \mu \, 1_{\R^{2\times 2}} + U(t).
\end{equation}
We then consider
$$
v: [0,\infty) \to \R, \qquad v(t)=|h(t)|^2= h_1^2(t)+ h_2^2(t),
$$
and we compute that
\begin{align*}
v''= 2 (|h'|^2 + \langle h'', h \rangle ) &= 2 \Bigl(|h'|^2 + \gamma \langle  h', h \rangle + \lambda |h|^2 + \langle e^{(\gamma-\delta)t}V(t) h, h\rangle\Bigr)\\
&= 2|h'|^2 + \gamma v' + 2 \Bigl(\lambda v - \langle e^{(\gamma-\delta)t}V(t) h, h\rangle\Bigr)\\
&\ge 2|h'|^2 + \gamma v' + 2 \Bigl(\lambda - e^{(\gamma-\delta)t} \|V\|_\infty \Bigr)v.
\end{align*}
Since $\gamma< \delta$, we may fix $t_0> 0$ such that $e^{(\gamma-\delta)t} \|V\|_\infty < \frac{\lambda}{2}$ for $t \ge t_0$, which yields that
$$
v'' -\gamma v' \ge 2|h'|^2 + \lambda v \ge 0 \qquad \text{on $[t_0,\infty)$.}
$$
With $\gamma_0:= \min \{2,\lambda\}>0$, we also have that
$$
2|h'|^2 + \lambda v \ge \gamma_0 (|h'|^2 + |h|^2) \ge \gamma_0 \langle h', h \rangle + \frac{\gamma_0}{2}(|h'|^2 + |h|^2)
\ge \gamma_0 v' + \frac{\gamma_0}{2}(|h'|^2 + |h|^2),
$$
and thus
\begin{equation}
  \label{eq:appendix-diff-ineq}
v'' - (\gamma+ \gamma_0) v' \ge \frac{\gamma_0}{2}(|h'|^2 + |h|^2) \ge 0 \qquad \text{on $[t_0,\infty)$.}
\end{equation}
Consequently, the function $v'- (\gamma+ \gamma_0)  v$ is increasing on $[t_0,\infty)$, and thus
$$
c:= \lim \limits_{t \to \infty}\bigl[v'(t)-(\gamma+ \gamma_0) v(t)\bigr] \; \in (-\infty,\infty]\qquad \text{exists.}
$$
We suppose by contradiction that $c \in (0,\infty]$. Then there exists $t_1 \ge t_0$ and $\rho>0$ such that
$$
v' - (\gamma+ \gamma_0) v \ge  \rho \qquad \text{on $[t_1,\infty)$}
$$
and thus
$$
\liminf_{t \to \infty} v(t)e^{-(\gamma+\gamma_0)t}>0,
$$
whereas (\ref{eq:pointwise-estimate-h})
  implies that
\begin{equation}
  \label{eq:pointwise-estimate-h-0-1}
v(t) \le \frac{4}{\gamma} \|h\|_*^2 \,  e^{\gamma t}  \qquad \text{for $t \ge 0$.}
\end{equation}
This is a contradiction, and thus $c \in (-\infty,0]$. Along a sequence $t_n \to \infty$ we then have, by (\ref{eq:appendix-diff-ineq}),
$$
0 = \lim_{n \to \infty}\Bigl(v''(t_n) -(\gamma+ \gamma_1) v'(t_n)\Bigr) \ge \frac{\gamma_0}{2} \limsup_{n \to \infty}\bigl( |h'(t_n)|^2+|h(t_n)|^2\bigr),
$$
which implies that $h(t_n) \to 0$ and $h'(t_n) \to 0$. Consequently, $v(t_n) \to 0$ and $v'(t_n)= 2 \langle h'(t_n), h(t_n) \rangle \to 0$ and thus
$$
c= \lim_{n \to \infty} \Bigl(v'(t_n) -(\gamma+\gamma_1) v(t_n)\Bigr) = 0
$$
Next we put $\tilde v(t):= v(t)e^{-(\gamma+\gamma_1)t}$, so that $\lim \limits_{t \to \infty} \tilde v(t)= 0$ by (\ref{eq:pointwise-estimate-h-0-1}).
As $c=0$, we find that
$$
\tilde v'(t) = \bigl(v'(t)-(\gamma+\gamma_1)v(t)\bigr)e^{-(\gamma+\gamma_1)t}= o(e^{-(\gamma+\gamma_1)t})\qquad \text{as $t \to \infty$}
$$
and hence
$$
|\tilde v(t)|= \Bigl| \int_t^\infty \tilde v'(s)\,ds \Bigr| = o(e^{-(\gamma+\gamma_1)t}) \qquad \text{as $t \to \infty$}
$$
which implies that
$$
v(t) = e^{(\gamma+\gamma_1)t}\tilde v(t) \to 0 \qquad \text{as $t \to \infty$.}
$$
Hence we conclude that $h(t) \to 0$ as $t \to \infty$, so (\ref{eq:claim-limit-exist}) holds.\\
{\underline{ Case 2:}} $\lambda=0$.\\
In this case we have, by (\ref{eq:radial-variable-linear-problem-1-help-lemma})
\begin{equation}
\label{equation-estimate-appendix}
|\partial_t \bigl(e^{-\gamma t} h'(t))| \le C
e^{- \delta t} |h(t)| \qquad \text{for $t \ge 0$}
\end{equation}
and thus
\begin{equation}
\label{equation-estimate-appendix-1}
|\partial_t \bigl(e^{-\gamma t} h'(t))| \le C e^{(\frac{\gamma}{2} - \delta) t} \qquad \text{as $t \to \infty$.}
\end{equation}
by (\ref{eq:pointwise-estimate-h}). Here and in the following, the letter $C$ denotes different positive constants. Since $\delta>\gamma > \frac{\gamma}{2}$, we infer that the limit $\lim \limits_{t \to \infty} e^{-\gamma t} h'(t)$ exists, and this limit must be zero since $\|h\|_*<\infty$.  We may then integrate
(\ref{equation-estimate-appendix-1}) to see that
$$
|e^{-\gamma t} h'(t)| \le C e^{(\frac{\gamma}{2} -\delta)t}
$$
and thus $|h'(t)| \le C e^{(\frac{3}{2}\gamma-\delta)t}$ for $t \ge 0$.
If $\frac{3}{2}\gamma-\delta \ge 0$, integration then shows that
$$
|h(t)|= \Bigl| \int_0^t h'(s)\,ds\Bigr| \le C e^{(\frac{3}{2}\gamma-\delta)t}\qquad \text{for $t \ge 0$.}
$$
We may then combine this estimate with (\ref{equation-estimate-appendix}) and integrate to find that
$$
|e^{-\gamma t} h'(t)|\le C  e^{(\frac{3}{2}\gamma-2\delta)t}\qquad \text{for $t \ge 0$,}
$$
i.e.,
$$
|h'(t)| \le C e^{(\frac{5}{2}\gamma-2\delta)t}\qquad \text{for $t \ge 0$.}
$$
We may iterate this process $n$ times, where $n \in \N$ is chosen such that
$\frac{\gamma}{2} + (n-1)(\gamma-\delta) \ge 0$ and
$\frac{\gamma}{2} + n(\gamma-\delta) < 0$. Consequently, we obtain that
$$
|h'(t)| \le C e^{(\frac{\gamma}{2} + n(\gamma-\delta))t} \quad \text{for $t \ge 0$.}
$$
Hence $h'(t)$ decays exponentially as $t \to \infty$, and from this we deduce (\ref{eq:claim-limit-exist}).\\
The proof is finished.
\end{proof}

\subsection{Part B: A doubling Lemma}
\label{sec:appendix-b:-doubling}

In the following, we note a simplified variant of a result of Polacik, Quittner and Souplet \cite{PQS}. We include the elementary proof for the convenience of the reader.

\begin{lemma}
\label{doubling-simplified-further} Let $(X,d)$ be a complete metric
space and $M: X \to (0,\infty)$ be a function which is bounded on compact subsets of
$X$. Then for any $s_* \in X$ there exists $t_* \in \overline{B_2(s_*)} \subset X$
such that
\begin{equation}
  \label{eq:doubling-property}
 M(t_*) \ge M(s_*) \qquad \text{and}\qquad M(t) \le 2 M(t_*)\quad \text{for all $t  \in B_{\frac{M(s_*)}{M(t_*)}}(t_*)$}.
\end{equation}
\end{lemma}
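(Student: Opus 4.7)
The plan is to argue by contradiction, building an iterative sequence that would violate the local boundedness of $M$. Set $s_0 := s_*$, and suppose no element of $\overline{B_2(s_*)}$ can serve as $t_*$. I would show by induction that there exists a sequence $(s_n)_{n \ge 0}$ in $X$ such that
$$
d(s_{n+1}, s_n) < \frac{M(s_*)}{M(s_n)}, \qquad M(s_{n+1}) > 2 M(s_n) \qquad \text{for every $n \ge 0$.}
$$
Indeed, having constructed $s_0,\dots,s_n$ with $M(s_n) \ge M(s_*)$ (which is automatic for $n=0$ and propagates from the second inequality), the failure of $s_n$ as a candidate for $t_*$ would exactly produce such an $s_{n+1}$, and the inequality $M(s_{n+1}) > 2 M(s_n) \ge M(s_*)$ keeps the induction going.

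Iterating the second inequality gives $M(s_n) > 2^n M(s_*)$, and substituting this into the first estimate yields $d(s_{n+1}, s_n) < 2^{-n}$. Summing the geometric series then shows that $d(s_n, s_0) < 2$ for all $n \in \N$ and that $(s_n)_n$ is a Cauchy sequence in $X$. Using completeness, I would pass to a limit $s_\infty \in \overline{B_2(s_*)}$. The set $K := \{s_n \::\: n \ge 0\} \cup \{s_\infty\}$ is then compact, being a convergent sequence together with its limit, while $M(s_n) > 2^n M(s_*) \to \infty$ contradicts the hypothesis that $M$ is bounded on compact subsets of $X$. Consequently the construction must stop at some finite stage, and that stopping point provides the required $t_* \in \overline{B_2(s_*)}$.

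The argument is essentially bookkeeping, so I do not expect a genuine obstacle. The only delicate point is the matching between the factor $2$ in the radius of the target ball $\overline{B_2(s_*)}$ and the factor $2$ in the doubling inequality $M(t) \le 2 M(t_*)$: these are precisely what make the telescoping sum $\sum_{n \ge 0} 2^{-n} = 2$ confine the entire sequence to $\overline{B_2(s_*)}$. The other point to handle with care is invoking completeness of $X$ (rather than local compactness, which is not assumed) to produce $s_\infty$, and then exploiting the fact that a convergent sequence together with its limit is compact in any metric space.
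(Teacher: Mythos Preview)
Your proof is correct and follows essentially the same contradiction--iteration scheme as the paper: build a sequence with $M(s_{n+1})>2M(s_n)$ and $d(s_{n+1},s_n)<M(s_*)/M(s_n)$, deduce it is Cauchy and stays in $\overline{B_2(s_*)}$, then obtain a contradiction with local boundedness of $M$. The only cosmetic point is that the verification $d(s_n,s_0)<2$ should appear \emph{inside} the inductive step (since you need $s_n\in\overline{B_2(s_*)}$ before you may invoke the standing contradiction hypothesis to produce $s_{n+1}$), exactly as the paper arranges it; all the ingredients for this are already in your argument.
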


\begin{proof}
We follow the proof of Polacik, Quittner and Souplet \cite{PQS}.  Assuming by
contradiction that the lemma is false, we can successively construct a
sequence $(t_n)_n \subset X$ such that $t_0=s_*$
and
\begin{equation}
  \label{eq:doubling-property-proof}
M(t_{n+1}) > 2 M(t_{n}) \quad \text{and}\quad \dist(t_{n+1},t_{n})
\le M(s_*)/M(t_{n})
\end{equation}
for $n \in \N \cup \{0\}$. Indeed, suppose that $t_0,\dots,t_{n}$ are already constructed with this property. Then we have that
\begin{equation}
  \label{eq:doubling-property-proof-0}
M(t_k) \ge 2^{k} M(s_*) \qquad \text{for $k=0,\dots,n$}
\end{equation}
and thus
\begin{equation}
  \label{eq:doubling-property-proof-1}
\dist(t_k,t_{k-1}) \le \frac{M(s_*)}{M(t_{k-1})} \le 2^{1-k} \qquad \text{for $k=1,\dots,n$}.
\end{equation}
This shows that
$$
\dist(t_n,s_*) = \dist(t_n,t_0) \le \sum_{k=1}^n \dist(t_k,t_{k-1}) \le 2,
$$
So if there is no $t_{n+1}$ satisfying (\ref{eq:doubling-property-proof}),
then (\ref{eq:doubling-property}) is true with $t_*=t_{n} \in \overline{B_2(s_*)}$, contrary to what we assume. By induction, we thus see that the sequence exists as claimed.\\
On the other hand, this sequence is a Cauchy sequence by (\ref{eq:doubling-property-proof-1}), and $M(y_n) \to \infty$ as $n \to \infty$ by (\ref{eq:doubling-property-proof-0}). This contradicts the assumption that
$X$ is complete and $M$ is bounded on compact subsets.  Hence the lemma is proved.
\end{proof}


\begin{thebibliography}{50}
\small
\bibitem{A-M-S}
C.O. Alves, D.C. de Morais Filho, M.A.S. Souto, On systems of
elliptic equations involving subcritical or critical Sobolev
exponents, \emph{Nonlinear Analysis} 42 (2002) 771-787.
\bibitem{A-C}
A. Ambrosetti, E. Colorado, Standing waves of some coupled nonlinear
Schr\"{o}dinger equations, \emph{J. London Math. Soc.} 75(2) (2007) 67-82.
\bibitem{A-M}
A. Ambrosetti, A. Malchiodi, \emph{Nonlinear analysis and semilinear
elliptic problems}, CSAM, 104, Cambridge University Press, 2007.
\bibitem{A-D}
A. Ambrosetti, D. Arcoya, \emph{An introduction to nonlinear functional
analysis and elliptic problems,} Progress in Nonlinear Differential
Equations and Their Application, 82, Birkh\"{a}user, 2011.
\bibitem{Bartsch-Weth-Willem}
T. Bartsch, T. Weth, M. Willem,  Partial symmetry of least energy nodal solutions to some variational problems, {\em Journal d'Analyse Math\'ematique} 96 (2005), 1-18.
\bibitem{B-W-Z}
T. Bartsch, Z.Q. Wang, Z. Zhang, On the Fu\v{c}ik point spectrum
for Schr\"{o}dinger operators on $\mathbb{R}^N$, \emph{J. Fixed Point
Theory Appl.} 5(2)(2009) 305-317.
\bibitem{B-L-W-Z}
H. Berestycki, T. Lin, J. Wei, C. Zhao, On phase-Separation models:
asymptotics and qualitative properties, \emph{Arch. Rational Mech. Anal.}
208 (2013) 163-200.
\bibitem{B-N}
H. Berestycki, L. Nirenberg, On the method of moving planes and
the sliding method, \emph{Bol. Sot. Bras. Mat.}, 22(1991) l-37.
\bibitem{B-S-R-1}
D. Bonheure, E.M. dos Santos, M. Ramos, Ground state and non-ground
state solutions of some strongly coupled elliptic systems,
\emph{Transactions of the American Mathematical Society} 364 (2012)
447-491.
\bibitem{B-S-R-2}
D. Bonheure, E.M. dos Santos, M. Ramos, Symmetry and symmetry
breaking for ground state solutions of some strongly coupled
systems, \emph{J. Functional Analysis} 264 (2013) 62-96.
\bibitem{BK}
H. Brezis, T. Kato, Remarks on the Schrodinger operator with singular
complex potentials, \emph{J. Math. Pures Appl.} 58 (1979) 137-151.
\bibitem{B-W} J. Byeon, Z. Wang, On the H\'{e}non equation:asymptotic profile of ground ststes,I, \emph{Ann. I. H. Poincar\'{e}-AN} 23(2006) 803-828.
\bibitem{C-L}
L. A. Caffarelli, Fang-Hua Lin, Singularly perturbed elliptic systems and multi-valued harmonic functions with free boundaries. J. Amer. Math. Soc. 21 (2008), no. 3, 847-862.
\bibitem{C-R}
{  M. Calanchi, B. Ruf,} Radial and non-radial solutions for
Hardy-H\'{e}non type elliptic systems, \emph{Calc. Var.} 38 (2010) 111-133.
\bibitem{C-P}
{D. Cao, S. Peng,} The asymptotic behaviour of the ground state
solutions for H\'{e}non equation, \emph{J. Math. Anal. Appl.} 278 (2003)
1-17.
\bibitem{D-P}
L. Damascelli, F. Pacella, Symmetry results for coorperative
elliptic systems via linearization, \emph{SIAM J. Math. Anal.} 45(3) (2013)
1003-1026.
\bibitem{D-W-Z-1}
{E.N. Dancer, K. Wang, Z. Zhang}, Uniform H\"{o}lder estimate for
singularly perturbed parabolic systems of Bose-Einstein condensates
and competing species, \emph{J. Differential Equations} 251 (2011)
2737-2769.
\bibitem{D-W-Z-2}
E.N. Dancer, K. Wang, Z. Zhang, The limit equation for the
Gross-Pitaevskii equations and S. Terracini's conjecture, \emph{J.
Functional Analysis} 262 (2012) 1087-1131.
\bibitem{D-W-Z-3}
{E.N. Dancer, K. Wang, Z. Zhang}, Dynamics of strongly competing
systems with many species, \emph{Transactions of The American Mathematical Society} 364 (2012) 961-1005.
\bibitem{D-W-W}
E.N. Dancer, J. Wei, T. Weth, A priori bounds versus multiple
existence of positive solutions for a nonlinear Schr\"{o}dinger
system, \emph{Ann.I.H.Poincar\'{e}-AN} 27 (2010) 953-969.
\bibitem{G-N-N 1}
B. Gidas, W. Ni, L. Nirenberg, Symmetry and related properties via
the maximum principle, \emph{Comm. Math. Phys.} 68 (1979) 209-243.
\bibitem{G-N-N 2}
B. Gidas, W. Ni, L. Nirenberg, Symmetry of positive solutions of
nonlinear elliptic equations in $\mathbb{R}^n$, \emph{Mathematical
Analysis and Applications,} Part A, Adv. in Math. Suppl. Stud., 7a,
369-402, Academic Press, 1981.
\bibitem{G-T}
D. Gilbarg, N. Trudinger, \emph{Elliptic partial differential equations of second order}, Springer Verlag, 2001.
\bibitem{H-S}
H. Hajaiej, C.A. Stuart, Existence and non-existence of Schwarz
symmetric ground states for elliptic eigenvalue problems, \emph{Annali di
Matematica} 184 (2005) 297-314.
\bibitem{Henon}
M. H\'enon, Numerical experiments on the stability of spherical stella
r systems, {\em Astron. and Astroph.} 24 (1973), 229–238.
\bibitem{Moreira-dos-Santos-Pacella}
E. Moreira dos Santos, F. Pacella, Morse index of radial nodal solutions of H\'enon type equations in dimension two. {\em Commun. Contemp. Math.} 19 (2017), no. 3, 1650042, 16 pp.
\bibitem{N}
W. Ni, A nonlinear Dirichlet problem on the unit ball and its
applications, \emph{Indiana Univ. Math. J.} 31 (1982) 801-807.
\bibitem{N-T-T-V}
Benedetta Noris, Hugo Tavares, Susanna Terracini, Gianmaria Verzini, Convergence of minimax structures and continuation of critical points for singularly perturbed systems. J. Eur. Math. Soc. (JEMS) 14 (2012), no. 4, 1245-1273.
\bibitem{Pacella-Weth}
F. Pacella, T. Weth, Symmetry of solutions to semilinear elliptic equations via Morse index, {\em Proc. Amer. Math. Soc.}
135 (2007), 1753-1762.
\bibitem{PQS}
Peter Polacik, Pavol Quittner, P. Souplet, Singularity and decay estimates in superlinear problems via Liouville-type theorems. I. Elliptic equations and systems. Duke Math. J. 139 (2007), no. 3, 555-579.
\bibitem{S-W813}
Y. Sato, Z-Q. Wang, On the multiple existence of semi-positive
solutions for a nonlinear Schr\"{o}dinger system, \emph{Ann. I. H.
Poincar\'{e}-AN }30 (2013) 1-22.
\bibitem{S-W}
D. Smets, M. Willem, Partial symmetry and asymptotic behaviour for
some elliptic variational problems, \emph{Calc. Var.} 18 (2003) 57-75.
\bibitem{S-S-W}
D. Smets, J. Su, M. Willem, Non-radial ground states for the
H\'{e}non equation, \emph{Comm. in Contemporary Math.} 4 (2002) 467-480.
\bibitem{S-T}
Nicola Soave, Hugo Tavares, New existence and symmetry results for least energy positive solutions of Schr\"{o}dinger systems with mixed competition and cooperation terms,arXiv:1412.4336v1, 2014.
\bibitem{Struwe} M. Struwe, \emph{Variational Methods: Applications to
nonlinear partial differential equations and Hamiltonian systems},
Fourth Edition, Springer, 2008.
\bibitem{T-W}
H. Tavares,  T. Weth,  Existence and symmetry results for competing variational systems. NoDEA Nonlinear Differential Equations Appl. 20 (2013), no. 3, 715-740.
\bibitem{T}
W.C. Troy, Symmetry properties in systems of semilinear elliptic
equations, \emph{J. Differential Equations} 42 (1981) 400-413.
\bibitem{TV} S. Terracini, G. Verzini, Multipulse Phases in K-Mixtures of
Bose-Einstein Condensates, \emph{Arch. Rational Mech. Anal.}, 194 (2009)
717-741.
\bibitem{van-Schaftingen-Willem}
J. Van Schaftingen, M. Willem, Symmetry of solutions of semilinear elliptic problems, {\em J. Eur. Math. Soc. (JEMS)} 10 (2008), 439--456.
\bibitem{W-Z}
K. Wang, Z. Zhang, Some new results in competing systems with many
species, \emph{Ann. I. H. Poincar\'{e}-AN} 27 (2010) 739-761.
\bibitem {W}
X. Wang, Sharp constant in a Sobolev inequality, \emph{Nonlinear Analysis}
20 (1993) 261-268.
\bibitem {W-W}
 Zhi-Qiang Wang, Michel Willem, Partial symmetry of vector solutions for elliptic systems. J. Anal. Math. 122 (2014), 69-85.
\bibitem{MW}
M. Willem, \emph{Minimax Theorems}, Progress in Nonlinear Differential
Equations and Their Applications, volume 24, Birkh\"{a}user, Boston,
1996.
\bibitem{Z-W}
Z. Zhang, K. Wang Existence and non-existence of solutions for a
class of Monge-Amp\`{e}re equations, \emph{J. Differential Equations} 246
(2009) 2849-2875.
\bibitem{Z}
Z. Zhang, \emph{Variational, Topological, and Partial Order Methods with
Their Applications}, Springer, 2013.

\end{thebibliography}
\end{document}